\documentclass[pdflatex,sn-mathphys-num]{sn-jnl}

\newif\ifclean

\cleanfalse

\usepackage{graphicx}%
\usepackage{multirow}%
\usepackage{amsmath,amssymb,amsfonts}%
\usepackage{amsthm}%
\usepackage{mathrsfs}%
\usepackage[title]{appendix}%
\usepackage{xcolor}%
\usepackage{textcomp}%
\usepackage{manyfoot}%
\usepackage{booktabs}%
\usepackage{algorithm}%
\usepackage{algorithmicx}%
\usepackage{algpseudocode}%
\usepackage{listings}%
\usepackage{dsfont}

\usepackage{tcolorbox}
\tcbuselibrary{skins, breakable}
\newtcolorbox[auto counter,number within=section]{approach}[2][]{%
	colback=black!10!white,colframe=white!50!black,arc=0pt,coltitle=black,boxrule=0.6pt,
	breakable,
	fonttitle=\bfseries,
	title=Approach~\thetcbcounter: #2,#1}

\usepackage{tikz-cd}
\usepackage{enumitem}

\algrenewcommand\algorithmicrequire{\textbf{Input:}}
\algrenewcommand\algorithmicensure{\textbf{Output:}}

\newcommand{\nmat}{n_{\textnormal{mat}}}
\newcommand{\nbin}{n_{\textnormal{bin}}}

\newcommand{\nspec}{n_{\textnormal{spec}}}
\newcommand{\emax}{E_{\textnormal{max}}}

\def\R			{\mathbb R}

\DeclareMathOperator{\supp}{supp}

\DeclareMathOperator{\im}{Im}

\newtheorem{definition}{Definition}[section]
\newtheorem{theorem}[definition]{Theorem}
\newtheorem{lemma}[definition]{Lemma}

\usepackage{xcolor}

\makeatletter
\renewcommand*{\@textcolor}[3]{%
	\protect\leavevmode
	\begingroup
	\color#1{#2}#3%
	\endgroup
}
\makeatother

\begin{document}

\title[]{Enhanced attenuation modelling for multispectral computed tomography}

\author*[1]{\fnm{J\"urgen} \sur{Jeschke}}\email{jjeschke@uni-bremen.de}

\author[1]{\fnm{Dirk} \sur{Lorenz}}\email{d.lorenz@uni-bremen.de}

\affil*[1]{\orgdiv{Center for Industrial Mathematics}, \orgname{University of Bremen}, \orgaddress{Postfach 330440, \postcode{28334} \state{Bremen}, \country{Germany}}}

\abstract{We revisit the modelling of the material decomposition problem in multispectral computed tomography and propose to leverage the known absorption spectra of different materials. With this we both derive new uniqueness conditions for the material decomposition in the case that the attenuation can mainly be described by the photoelectric effect and also offer a three-step reconstruction model. Our uniqueness criterion only requires certain characteristics of the bin sensitivity functions and the material attenuation spectra and hence, can be computed a-priori before measurements are taken.}

\keywords{multispectral computed tomography, attenuation modelling for multispectral computed tomography, uniqueness, three-step decomposition method}

\maketitle

\section{Introduction}
The idea of multispectral computed tomography (MSCT) was introduced in the 1970s~\cite{alvarez1976energyselective}. This method is also referred to and well described as spectral multi-detector computed tomography (sMDCT)~\cite{Tang2023}. The basic concept is to extend conventional CT as recounted in~\cite{Buzug2008} by using X-ray detectors that measure the intensity in multiple energy bins. Under the assumption that both the emission spectrum of the source and the attenuation spectra of different materials are known, one attempts to reconstruct an artifact free image. If the object is composed of a few known materials, one also aims to compute the distributions of these materials, a problem called material decomposition problem~\cite{le2011least}. This problem can be split into two separate steps: First obtaining the sinograms of the individual materials and then use a standard tomographic inversion method to compute the material distributions from the sinograms. The problem of inverting the sinograms is well understood, as it is used for regular CT where one assumes that the absorption is independent of the used X-ray energy or only a single energy is used~\cite{prohaszka2024derivative}.

\subsection{Multispectral computed tomography model}
\label{subsec:msct-model}
We now recall the standard model for MSCT. Let  $\Omega := \R^2$ be the two-dimensional space and $x \in \Omega$ a point in it. Then for $E \in \R_{\geq0} $ as the energy of the X-ray beam, the (unknown) \emph{attenuation coefficient} is denoted by $\mu(x,E)$ and describes the absorption of said X-ray at the point $x$. The intensity of the X-ray beam after passing through the scanned object is then measured at the detector.
 Each detector pixel measures the incoming energy in $\nbin$ bins according to (known) \emph{sensitivity functions} $s_b(E)$, for $b=1,\dots,\nbin$, 
 i.e. the measurement of a multispectral sensor is split into several bins according to these sensitivity functions. These functions depend on the construction of the detector but also incorporate the emission spectrum of the X-ray source.
 We can assume for both that $\mu(x,E) \geq 0$ and  $s_b(E) \geq 0$. For a line $\ell \subset \Omega $ from the X-ray source to a detector pixel, the reading at the corresponding detector pixel in bin $b$ is hence given by
\begin{align}
	\label{eq:forward-model-emax}
	Y_b(\ell) = \int\limits_0^{\emax} s_b(E)\, \exp\!\Bigl( - \int_\ell \mu(x,E)\, \mathrm{d}\ell(x) \Bigr) \, \mathrm{d}E, \quad b = 1,\dots,\nbin,
\end{align}
where the integral upper bound $\emax$ is the maximum energy value of the source, which is enforced by the operating voltage of the X-ray source and also assumed to be known.

The MSCT problem is to recover $\mu(x,E)$ from measurements $Y_b(\ell)$, and for infinitely many bins it is undetermined. Hence, we focus on the \emph{material decomposition problem}, where it is assumed that only a finite number of $\nmat$ distinct materials are present, each with (known) \emph{attenuation spectra} $\mu_m(E)$ and (unknown) \emph{spatial densities} $f_m(x)$, for $m=1,\dots,\nmat$. With this assumption we can represent the energy and space dependent attenuation coefficient as the sum over the number of the material densities multiplied by the attenuation spectra, i.e.
\begin{align}\label{eq:M-sum}
	\mu(x,E) =  \sum_{m=1}^{\nmat} \mu_m(E) f_m(x).
\end{align}
Inserting \eqref{eq:M-sum} in \eqref{eq:forward-model-emax} we get
\begin{equation*}
	\begin{aligned}
		\label{eq:forward-model2}
		Y_b(\ell) 
		& = \int\limits_0^{\emax} s_b(E) \exp\Bigg( -\sum_{m=1}^{\nmat} \mu_m(E) \int_\ell f_m(x) \, \mathrm{d}\ell(x) \Bigg) \, \mathrm{d}E , \quad b = 1,\dots,\nbin.
	\end{aligned}
\end{equation*}
In the case of a parallel beam architecture, we can characterize each line $\ell$ the X-ray travels by its normal vector $\theta \in [0,\pi)$ and its distance $r \in \R$ from the origin with
\begin{align*}
	\ell_{\theta,r} = \left\{ \left(r \cos ( \theta) - s \sin ( \theta) , r \sin (\theta) + s \cos (\theta) \right) \mid s \in \R \right\}.
\end{align*} 
With this we can identify each sensor position by a point in $S := [0, \pi) \times \R$ and model  the sensor reading as a function $Y : S \to \R^{\nbin}$ with 
\begin{align*}
	Y_b(\theta,r) = Y_b(\ell_{\theta,r}), \quad b = 1,\dots,\nbin.
\end{align*}
For each of those line integrals  we can write 
\begin{align*}
	\int_{\ell_{\theta,r}} f_m(x) \, \mathrm{d}\ell_{\theta,r}(x) = \int_{\R} f\left(r \cos ( \theta) - s \sin ( \theta) , r \sin (\theta) + s \cos (\theta) \right)\mathrm{d}s = \mathcal{R}f_m(\theta,r),
\end{align*}
where $\mathcal{R}$ denotes the Radon transform. This gives us the \emph{MSCT forward model} as
\begin{align}
	\label{eq:MSCT-forward-model}
	Y_b = \int\limits_0^{\emax} s_b(E) \exp\Bigg( -\sum_{m=1}^{\nmat} \mu_m(E) \mathcal{R}f_m \Bigg) \, \mathrm{d}E , \quad b = 1,\dots,\nbin,
\end{align}
see also \cite{prohaszka2024derivative}. 

To obtain the \emph{two step} method, we define the \emph{bin-sinogram function} $\Phi \colon \R^{\nmat} \to \R^{\nbin}_{>0}$ as follows: For a vector $z \in \R^{\nmat}$ with entries $z_{m}$ that represent the total absorption caused by the $m$th material (i.e. an entry in the sinogram of the $m$th material) the measurement of the $b$th bin is 
\begin{equation} \label{eq:Phi}
	\Phi_b(z) = \int\limits_0^{\emax} s_b(E) \exp \left( -\sum\limits_{m=1}^{\nmat} \mu_m(E) z_m \right) \, \mathrm{d}E , \quad b = 1,\dots,\nbin.
\end{equation}
With this we can represent the MSCT forward model as $Y = \Phi ( \mathcal{R}f_1,\dots,\mathcal{R}f_{\nmat}) $. Hence, the two step reconstruction method is: First solve the \emph{bin-sinogram decomposition problem} to get the material sinograms $\mathcal{R}f_m$ from the bin measurements $Y$ and second apply the \emph{Radon transform inversion} to obtain the material densities $f_m$ from $\mathcal{R}f_m$~\cite{Bal_2020}.

Out of~\cite{Bal_2020,bal2022inversion,Gao_2024} it is known that uniqueness and stability estimates for the non-linear bin-sinogram decomposition problem can be obtained by evaluating  the Jacobian. Often this is done by looking at $-\ln(\Phi)$ with the resulting matrix $J(z)$ as
\begin{equation*} \label{eq:DPhi} 
	J_{b,m}(z) =  \left(\Phi_b (z)\right)^{-1} \int\limits_0^{\emax} s_b(E) \, \mu_m(E) \, \exp \left( -\sum\limits_{m'=1}^{\nmat} \mu_{m'}(E) z_{m'} \right) \, \mathrm{d}E,
\end{equation*}
for $b = 1, \dots, \nbin$ and $m = 1, \dots, \nmat$.
 Hadamard global inverse function theorem~\cite{Ruzhansky2014inversion} states then that a differentiable mapping from $ \R^{\nmat} \to \R^{\nbin} = \R^{\nmat}$ is a diffeomorphism and therefore well-posed if the mapping is proper and the Jacobian never vanishes.
 When restricting the domain of the mapping to a closed rectangular region $\mathcal{R} \subset \R^{\nmat}$, theorem 3 out of~\cite{Bal_2020} gives injectivity in the case that $\nmat = \nbin = 2$ if Jacobian never vanishes on $\mathcal{R}$.
 For $\nmat = \nbin \geq 3$, this condition on $J(z)$ is not sufficient. Using theorem 4 out of~\cite{Gale1965} one obtains injectivity on $\mathcal{R}$ if $J(z)$ is a  $P$-matrix for every $z \in \mathcal{R}$, i.e. that it has only positive principal minors~\cite{Fiedler1962}. This also implies that the Jacobian is non vanishing, but by itself is a much stricter condition.
 From this one can also derive stability results~\cite{Bal_2020}.
 In practice all these approaches using the Jacobian without any further assumptions on $\Phi$ are usually difficult to verify, as one has to do calculations for every point in $\mathcal{R}$. 
 
 Other methods focus on the material attenuation spectra~\cite{Tang2021} or the sensitivity functions~\cite{Ren2021} to identify well-posedness properties of the individual parts, however omitting the nonlinearity of the entire problem.

\subsection{Contributions}

In this paper we propose a further division of the material decomposition problem by using the main physical aspects of the interaction of X-ray photons with matter, which are shared between all materials. This allows us to model the absorption of the individual materials based on a set of modelled X-ray interactions.
The influence of the individual materials can then be reconstructed by solving a linear problem.
Effectively this decomposes the problem into three steps and this allows for a deeper understanding of the problem as only one of these steps is non-linear, and the other steps are the well understood Radon inversion and the finite dimensional problem of material-energy decomposition which can be done separately for every detector pixel. 
 For the X-Ray interactions we will limit us to the photoelectric effect, however we will also describe how Compton scattering can be incorporated.
We demonstrate that this three step approach for the material decomposition can be applied in practice, and also that it allows to derive a simple criterion for uniqueness of the reconstruction problem. 

The rest of the paper is organized as follows: In Section~\ref{sec:attenuation-model} we refine the model for the absorption spectra and propose the three step model that uses this refined model. In Section~\ref{sec:bin-sino-decomp} we analyze the bin-sinogram decomposition problem and in Section~\ref{sec:Num_Ex} illustrate the resulting three step reconstruction method. Finally, Section~\ref{sec:conclusion} draws some conclusion.

\section{Modelling of attenuation spectra and three step model}
\label{sec:attenuation-model}
To better understand the bin-sinogram decomposition problem we take a closer look at the properties of the known attenuation spectra by taking the physical laws governing the interaction of photons and matter into account. The main effect to consider for low energy X-ray photons is the photoelectric effect where a photon is absorbed with subsequent ejection of an electron. Due to the quantified nature of the interaction it is required that the photon energy is greater than the binding energy of the electron~\cite{UBHD-67757941}. For electrons in the K shell one can describe the absorption chance by an inverse power law dependent on the photon energy. The L and M shells differ only by an additional factor~\cite{1191161}. We denote by $E_m^{(K)}$, $E_m^{(L)}$, and $E_m^{(M)}$ the binding energies of the K, L, and M shell of the $m$-th material respectively, and by $a_m^{(K)}$, $a_m^{(L)}$ and $a_m^{(M)}$ the corresponding factors for the absorption chance. Then the attenuation spectrum can be written as
\begin{align}
	\label{eq:material-attenuation-spectrum-shells}
	\mu_m(E) = E^{-c} \left(a_m^{(K)} \cdot \mathds{1}_{\geq E_m^{(K)}}(E) +  a_m^{(L)} \cdot \mathds{1}_{\geq E_m^{(L)}}(E) + a_m^{(M)} \cdot \mathds{1}_{\geq E_m^{(M)}}(E) \right) 
\end{align}
for $m = 1,\dots,\nmat$, where the exponent $c>0$ is chosen to best fit the real absorption function. In~\cite{1191161} a value of $c = 3.5$ is suggested and  \cite{4121176,2017SenIm..18....5R,Bunker_2010} use $c = 3$. 
In figure~\ref{fig:REC} a comparison of the attenuation spectrum of iodine, gadolinium, and water with the representation using \eqref{eq:material-attenuation-spectrum-shells} is shown for different values of $c$.
As this representation of the attenuation coefficient only uses the photoelectric effect and omits the contribution of Compton scattering, it is most of all applicable for lower photon energies or materials with larger atomic numbers~\cite{alvarez1976energyselective,4121176}. This behavior can also be seen in figure~\ref{fig:REC} for water.  If one wants to additionally incorporate Compton scattering, it can be done by adding an another constant  to \eqref{eq:material-attenuation-spectrum-shells}, as the  Klein–Nishina function describing the effect only varies a little with photon energy in the applicable  X-ray range~\cite{alvarez1976energyselective,2017SenIm..18....5R}. The attenuation spectra would then have to be represented as $\mu_m^{\text{Total}}(E) = \mu_m(E) + \mu_m^{\text{Compton}}$. For the further study we will assume that $\mu_m(E) \gg \mu_m^{\text{Compton}}$ and we can therefore approximate $\mu_m^{\text{Total}}(E) \approx \mu_m(E)$.
\begin{figure}[t]
		\begin{enumerate}[label=(\alph*)]
			\item\label{c=2.6Rec}
				\includegraphics[width=0.925\textwidth]{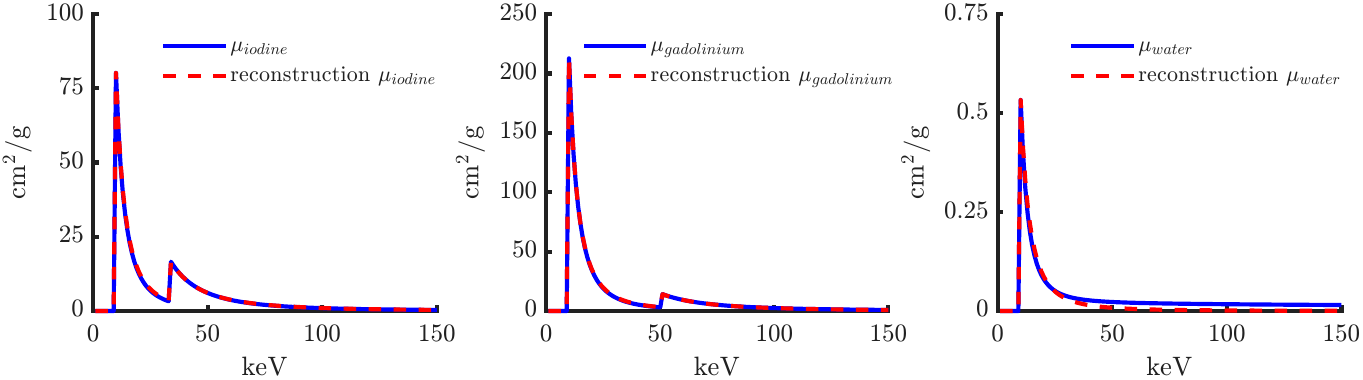}
				\bigskip
			\item\label{c=3.0Rec}
				\includegraphics[width=0.925\textwidth]{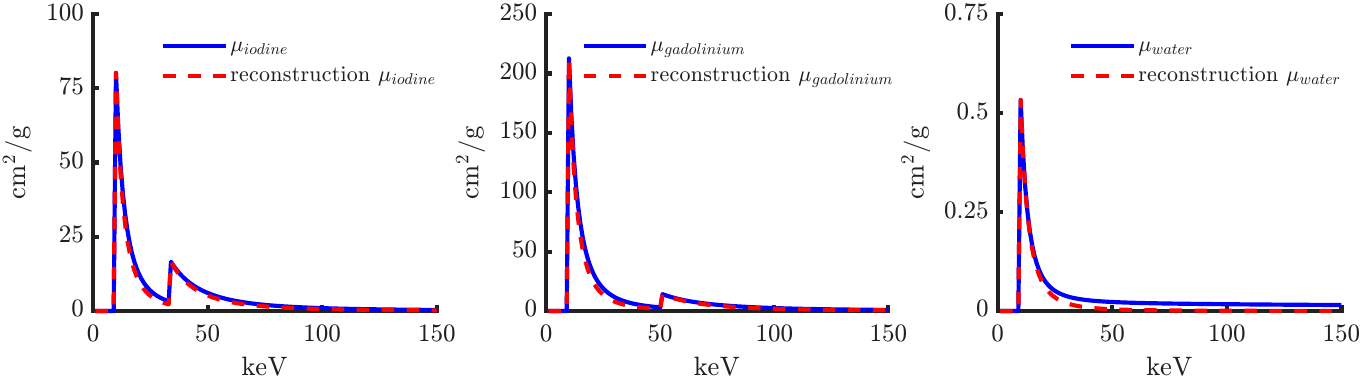}
				\bigskip
			\item\label{c=3.5Rec}
				\includegraphics[width=0.925\textwidth]{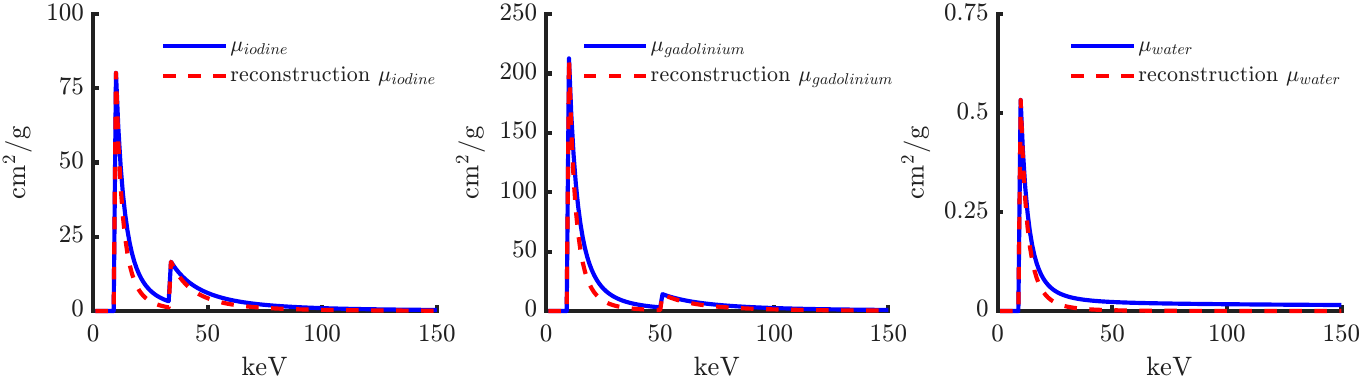}
		\end{enumerate}
		\caption{Mass attenuation coefficient of iodine, gadolinium and water out of~\cite{Mory_2018,Mory_2019_Git} and the representation using \eqref{eq:material-attenuation-spectrum-shells} with \ref{c=2.6Rec} $c = 2.6 $, \ref{c=3.0Rec} $c = 3 $ and \ref{c=3.5Rec} $c = 3.5 $.}
		\label{fig:REC}
\end{figure}

Let $E_{\textnormal{shells}} = \{ E_1^{(K)}, E_1^{(L)},E_1^{(M)}, \dots, E_{\nmat}^{(M)} \}$ be the set of binding energies, $\nspec =|\{ E \in E_{\textnormal{shells}} : E < \emax \}| $ the number of binding energy values smaller than $\emax$ and $E_{\textnormal{shells}}^{\emax} = \{ E_1,\dots,E_{\nspec},\emax  \} \subset E_{\textnormal{shells}} \cup \{\emax \}$ the set of the smallest $\nspec$ binding energies and the maximum X-ray energy. For $E_{\textnormal{shells}}^{\emax}$ we assume that $E_1 < E_2< \dots <E_{\nspec}<\emax$. Also let the \emph{spectral intervals} $I_1,\dots,I_{\nspec}$ be so that $ I_1 = [E_1,E_2)$, and so on until $I_{\nspec} = [E_{\nspec},\emax )$. With this intervals we can rewrite \eqref{eq:material-attenuation-spectrum-shells} as
\begin{align}
\label{eq:material-attenuation-spectrum}
\mu_m(E) =E^{-c} \left( \sum_{i=1}^{\nspec}  a_m^{(i)} \cdot \mathds{1}_{I_i} (E) \right),
\end{align}
where $a_m^{(i)}  \in \{ a_m^{(K)} , a_m^{(K)} + a_m^{(L)}, a_m^{(K)} + a_m^{(L)} + a_m^{(M)} \}$ for $i = 1,\dots,\nspec$.

Applying the modelling of the material attenuation spectra to the MSCT model by replacing the material attenuation function with its representation \eqref{eq:material-attenuation-spectrum} the  MSCT forward model \eqref{eq:MSCT-forward-model} becomes
\begin{equation}
\label{eq:new-MSCT-forward-model}
\begin{aligned}
	Y_b &= \int\limits_0^{\emax} s_b(E) \exp\Bigg( -\sum_{m=1}^{\nmat} \mu_m(E) \mathcal{R}f_m \Bigg) \, \mathrm{d}E \\
	&= \int\limits_0^{\emax} s_b(E) \exp\Bigg( -  \sum_{m=1}^{\nmat}  \cdot \left( E^{-c} \cdot \sum_{i=1}^{\nspec}  a_m^{(i)} \cdot \mathds{1}_{I_i} (E) \right) \mathcal{R}f_m \Bigg) \, \mathrm{d}E \\
	&=  \sum_{j=1}^{\nspec} \int\limits_{I_j}  s_b(E) \exp\Bigg( -  \sum_{m=1}^{\nmat}  \cdot \left( E^{-c} \cdot \sum_{i=1}^{\nspec}  a_m^{(i)} \cdot \mathds{1}_{I_i} (E) \right) \mathcal{R}f_m \Bigg) \, \mathrm{d}E \\
	&= \sum_{i=1}^{\nspec} \int\limits_{I_i} s_b(E) \exp\Bigg( - E^{-c} \cdot \sum_{m=1}^{\nmat} a_m^{(i)} \cdot  \mathcal{R}f_m \Bigg) \, \mathrm{d}E
\end{aligned}
\end{equation}
for $ b = 1,\dots,\nbin$, and the bin-sinogram function \eqref{eq:Phi} changes to
\begin{equation}
\label{eq:new-Phi}
\Phi_b(z) = \sum_{i=1}^{\nspec} \int\limits_{I_i} s_b(E) \exp\Bigg( - E^{-c} \cdot \sum_{m=1}^{\nmat} a_m^{(i)} \cdot z_m \Bigg) \, \mathrm{d}E
\end{equation}
for $z \in \R^{\nmat}$ and $ b = 1,\dots,\nbin$.

Using this we can split the bin-sinogram function into a linear part represented by the matrix $A \in \R^{\nspec \times \nmat}$ with
\begin{align}
\label{eq:A}
A = \begin{pmatrix}
	a_1^{1} & \cdots & a_{\nmat}^{1} \\ \vdots & & \vdots \\ a_1^{\nspec} & \cdots & a_{\nmat}^{\nspec}
\end{pmatrix}
\end{align}
and nonlinear part $\Psi: \R^{\nspec} \to \R^{\nbin}_{>0} $ given by
\begin{equation} \label{eq:Psi}
\Psi_b(v) =\sum_{i=1}^{\nspec} \int\limits_{I_i} s_b(E) \exp\Big( -  E^{-c} \cdot v_i \Big) \, \mathrm{d}E
\end{equation}
for $v \in \R^{\nspec}$ and $b \in  \{1,\dots,\nbin\}$. 
In the case of not negligible Compton scattering $A$ would be a matrix of the size $\nspec +1 \times \nmat$, and $\Psi$ would depend on $\nspec+1$ variables, where the additional variable is given by the sum over the Compton parts of all materials. The exponential function of the negative value of this extra variable would then be multiplied with the whole sum.
Regardless of this the bin-sinogram function then becomes
\begin{equation}\label{eq:Phi_Psi_A}
\Phi (z) = \Psi ( A \cdot z)
\end{equation}
for $z \in \R^{\nmat}$ and the MSCT forward model can now be divided into three parts with 
\begin{equation}\label{eq:Y_Phi_Psi_A}
Y = \Psi ( A \cdot ( \mathcal{R}f_1,\dots,\mathcal{R}f_{\nmat})^T ).
\end{equation}
\begin{figure}
\centering   	
\begin{tikzcd}
	\text{bin-sinogram} &  Y:S \to \R^{\nbin}	\arrow[d, shift left=15, bend left=50, "\textnormal{bin-decomposition}"]\\
	\color{cyan} \text{spectral-sinogram} 	 \arrow[u, shift left=15, bend left=50, cyan, "\Psi"]  & \color{cyan} v : S \to \R^{\nspec} \arrow[d, shift left=15, bend left=50, "\textnormal{material-energy decomposition}"]\\
	\text{material-sinogram}   \arrow[u, shift left=15, bend left=50, cyan,"A"] \arrow[uu, shift left=20, bend left=50, "\Phi"]  & z: S \to \R^{\nmat} \\
	\text{material-density}   \arrow[u, shift left=15, bend left=50, "\mathcal{R}"]  & f: \Omega \to \R^{\nmat}	
\end{tikzcd}
\caption{Depiction of the two step method consisting of the Radon transform $\mathcal{R}$ followed by the bin-sinogram function $\Phi$ and the three step representation where the bin-sinogram function is split into the material-energy mapping $A$ and the bin reading function $\Psi$ as highlighted in the depiction.}
\label{fig:ThreeStep}
\end{figure}

The matrix $A$ gives a representation of the X-ray absorption properties at distinct energy values, where
\begin{align*}
E^{c} \cdot \int\limits_\ell \mu(x,E)\, \mathrm{d}\ell(x) = \left( A \cdot \begin{pmatrix}
	\int_\ell f_1(x) \, \mathrm{d}\ell(x) \\ \vdots \\ \int_\ell f_{\nmat}(x) \, \mathrm{d}\ell(x) 
\end{pmatrix} \right)_i
\end{align*}
for $E \in I_i$, and then $\Psi$ turns these into the bin readings. Hence, we call $A$ the \emph{material-energy mapping} and $\Psi$ the \emph{bin-reading function}. The bin-sinogram decomposition problem can now also be split into the linear \emph{material-energy decomposition problem} of finding the material-sinogram given spectral-sinogram and the \emph{bin-decomposition problem} of getting the spectral-sinogram from the bin-sinogram, see figure~\ref{fig:ThreeStep}.

Inverting each of those steps we derive a three step reconstruction approach for the MSCT problem as follows:
\begin{approach}[label={Approche1},nameref={Three step approach with sinogram based material decomposition}]{Three step approach with sinogram based material decomposition}
\begin{enumerate}
	\item \emph{Bin-decomposition problem}: Find $v : S \to \R^{\nspec}$ 
	\begin{align*}
		\Psi(v(\theta,r)) = Y(\theta,r)
	\end{align*}
	for each point $(\theta,r)$
	\item \emph{material-energy decomposition problem}: Find $z: S \to \R^{\nmat}$ so that
	\begin{align*}
		Az(\theta,r) = v(\theta,r)
	\end{align*}
	for each point $(\theta,r)$
	\item \emph{Radon transform inversion}: Find $f : \Omega \to \R^{\nmat}$ so that
	\begin{align*}
		\mathcal{R}f_m = z_m
	\end{align*}
	for all $m = 1,\dots,\nmat$.
\end{enumerate}
\end{approach}

\begin{figure}
	\centering   	
	\begin{tikzcd}
		\includegraphics[width=0.175\linewidth]{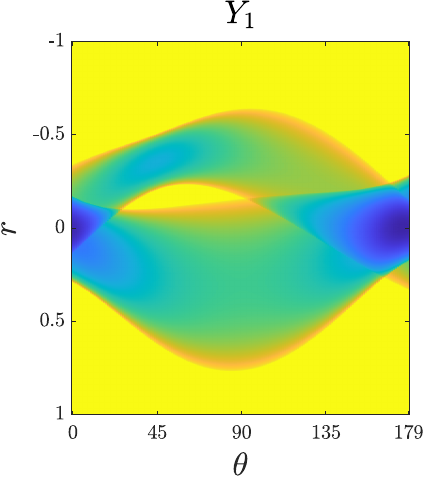} \arrow[r, shift left=20, bend left, "\textnormal{bin-decomposition}"] \arrow[r, shift left=15, blue] \arrow[dr, start anchor=south east, end anchor=north west, blue]&
		\includegraphics[width=0.175\linewidth]{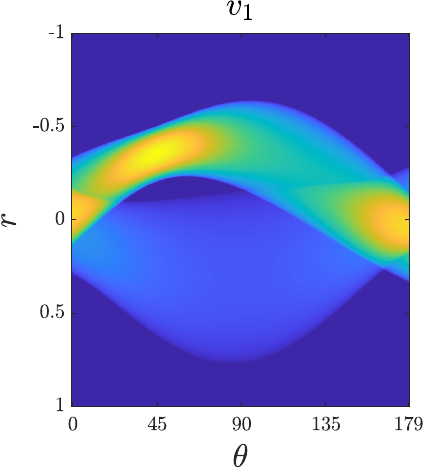} \arrow[r, shift left=20, bend left, "\textnormal{material-energy decomposition}"] \arrow[r, shift left=15, blue] \arrow[dr, start anchor=south east, end anchor=north west, blue] &
		\includegraphics[width=0.175\linewidth]{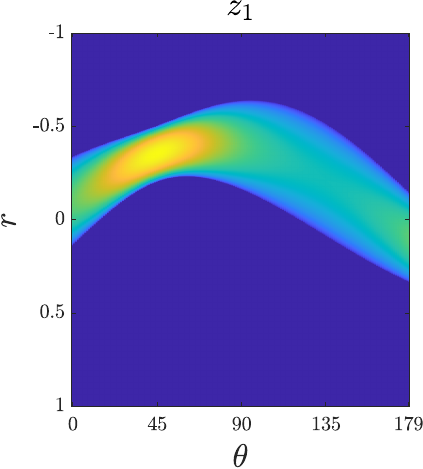} \arrow[r, shift left=20, bend left, "\textnormal{Radon inversion}"] \arrow[r, shift left=15, red] &
		\includegraphics[width=0.175\linewidth]{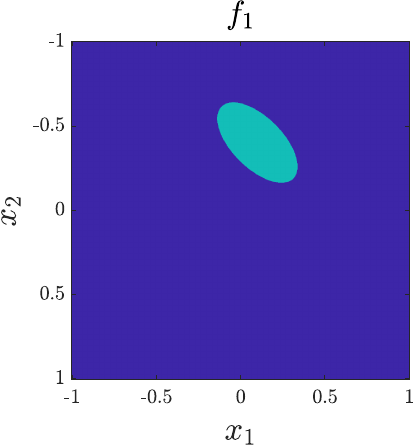} \\
		\includegraphics[width=0.175\linewidth]{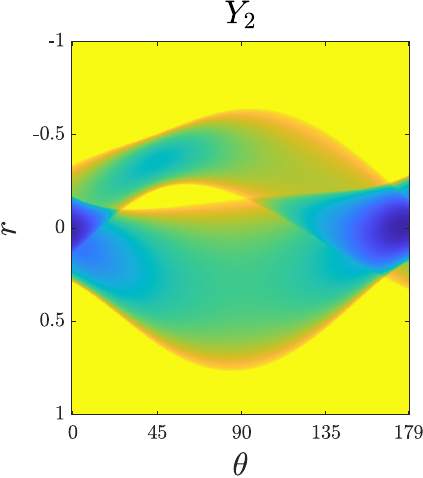}  \arrow[r, shift left=15, blue] \arrow[ur, start anchor=north east, end anchor=south west, blue] &
		\includegraphics[width=0.175\linewidth]{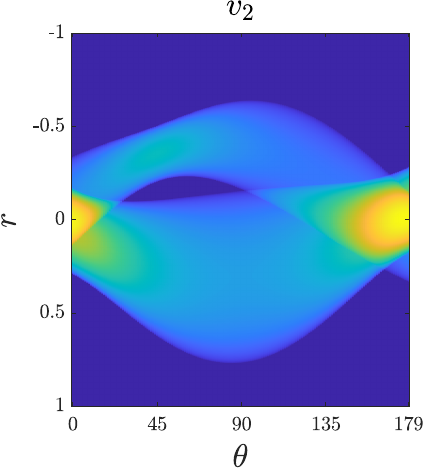} \arrow[r, shift left=15, blue] \arrow[ur, start anchor=north east, end anchor=south west, blue] &
		\includegraphics[width=0.175\linewidth]{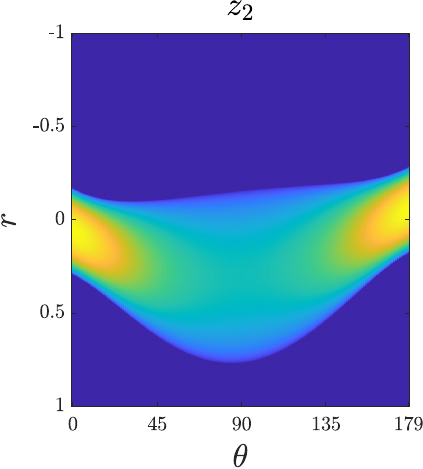} \arrow[r, shift left=15, red] & \includegraphics[width=0.175\linewidth]{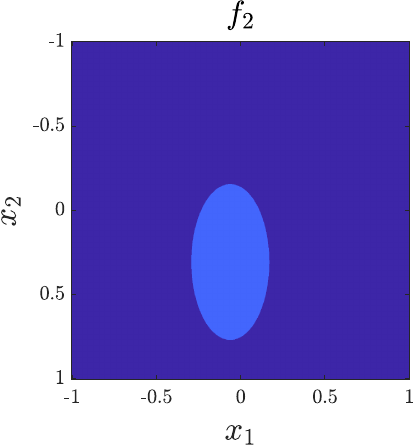}
	\end{tikzcd}
	\caption{Depiction of the reconstruction steps of approach~\ref{Approche1}, where a blue arrow denotes a dependency from a single pixel in the sinograms and a red  denotes a dependency of the entire function. In the first step we reconstructed for each energy spectral-interval $I_1$ and $I_2$ the spectral-sinograms $v_1$ and $v_2$ for each pair of $(\theta,r)$ individually based on the corresponding points in bin-sinograms  $Y_1$ and $Y_2$. With the same pointwise dependency we obtain for each material the material-sinograms $z_1$ and $z_2$ from $v_1$ and $v_2$. For the third step we calculate the whole material-densities $f_1$ with all points of $z_1$ and $f_2$ with $z_2$.}
	\label{fig:ThreeStep_1}
\end{figure} 

The results of the individual steps of approach~\ref{Approche1} are shown in figure~\ref{fig:ThreeStep_1}.
As the solution $v_{\ell} \in \R^{\nspec}$ of the bin-decomposition problem represents the absorption properties at distinct energy levels, we can also apply the Radon inversion immediately to the bin-decomposition and obtain the absorption properties at every point $x \in \R^2$ for the energy values $E_1,\dots,E_{\nspec}$. With the \emph{spectral absorption functions} $f_{E_i} : \Omega \to \R$ for $E_i = E_1,\dots,E_{\nspec}$ given by $f_{E_i} = \sum_{m=1}^{\nmat} a_m^{(i)} \cdot  f_m $ we get
\begin{equation}
\label{eq:new-MSCT-forward-model_2}
\begin{aligned}
	Y_b &= \sum_{i=1}^{\nspec} \int\limits_{I_i} s_b(E) \exp\Bigg( - E^{-c} \cdot \sum_{m=1}^{\nmat} a_m^{(i)} \cdot  \mathcal{R}f_m \Bigg) \, \mathrm{d}E \\
	&= \sum_{i=1}^{\nspec} \int\limits_{I_i} s_b(E) \exp\Bigg( - E^{-c} \cdot \mathcal{R} \Bigg( \sum_{m=1}^{\nmat} a_m^{(i)} \cdot  f_m \Bigg) \Bigg) \, \mathrm{d}E \\
	&= \sum_{i=1}^{\nspec} \int\limits_{I_i} s_b(E) \exp\Bigg( - E^{-c} \cdot \mathcal{R}f_{E_i} \Bigg) \, \mathrm{d}E. \\
\end{aligned}
\end{equation}

The MSCT forward model can then be depicted as 
\begin{equation}\label{eq:Y_Psi_R_A}
Y = \Psi ( \mathcal{R} (A \cdot ( f_1,\dots,f_{\nmat})^T) )
\end{equation}
and we obtain the three step approach with image space based material-energy decomposition. 
\begin{approach}[label={Approche2},nameref={Three step approach with image space based material decomposition}]{Three step approach with image space based material decomposition}
\begin{enumerate}
	\item \emph{Bin-decomposition problem}: Find $v : S \to \R^{\nspec}$ 
	\begin{align*}
		\Psi(v(\theta,r)) = Y(\theta,r)
	\end{align*}
	for each point $(\theta,r)$
	\item \emph{Radon transform inversion}: Find $f_{E_1},\dots,f_{E_{\nspec}} : \Omega \to \R$ so that
	\begin{align*}
		\mathcal{R}f_{E_1}(\theta,r) , \dots ,\mathcal{R}f_{E_{\nspec}}(\theta,r)  = v(\theta,r)
	\end{align*}
	for each point $(\theta,r)$
	\item \emph{material-energy decomposition problem}: Find $f : \Omega \to \R^{\nmat}$ so that 
	\begin{align*}
		A f  = (f_{E_1} , \dots ,f_{E_{\nspec}})^T.
	\end{align*}
\end{enumerate}
\end{approach}

\begin{figure}
	\centering   	
	\begin{tikzcd}
		\includegraphics[width=0.175\linewidth]{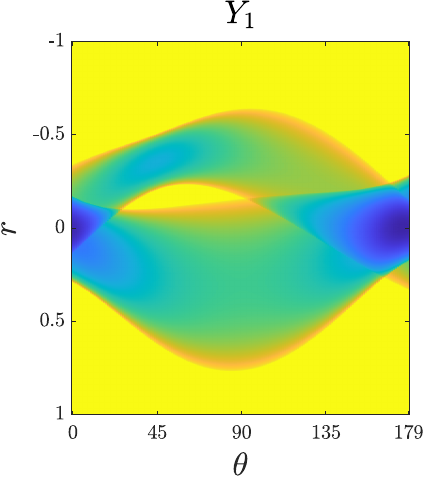} \arrow[r, shift left=20, bend left, "\textnormal{bin-decomposition}"] \arrow[r, shift left=15, blue] \arrow[dr, start anchor=south east, end anchor=north west, blue]&
		\includegraphics[width=0.175\linewidth]{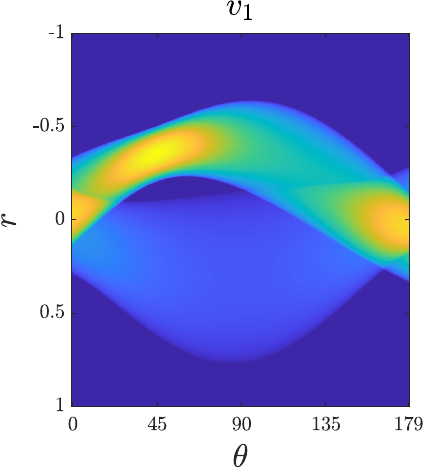} \arrow[r, shift left=20, bend left, "\textnormal{Radon inversion}"] \arrow[r, shift left=15, red] &
		\includegraphics[width=0.175\linewidth]{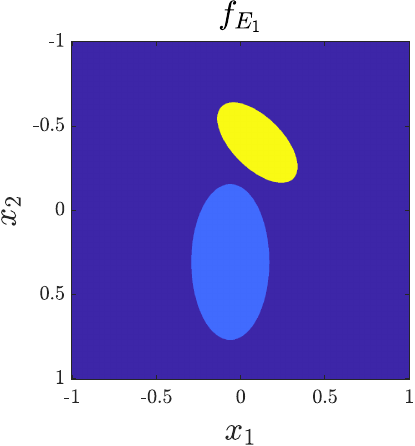} \arrow[r, shift left=20, bend left, "\textnormal{material-energy decomposition}"] \arrow[r, shift left=15, blue] \arrow[dr, start anchor=south east, end anchor=north west, blue] &
		\includegraphics[width=0.175\linewidth]{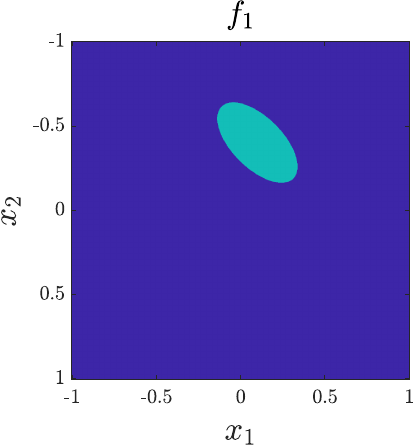} \\
		\includegraphics[width=0.175\linewidth]{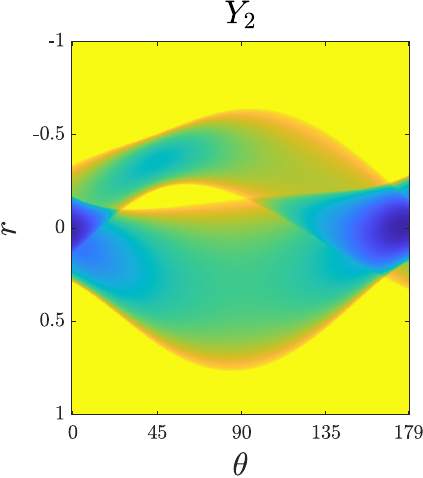}  \arrow[r, shift left=15, blue] \arrow[ur, start anchor=north east, end anchor=south west, blue] &
		\includegraphics[width=0.175\linewidth]{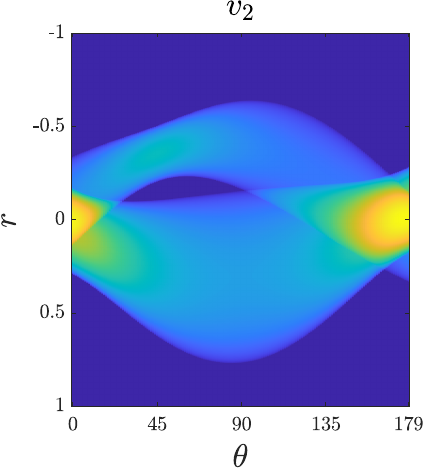} \arrow[r, shift left=15, red] &
		\includegraphics[width=0.175\linewidth]{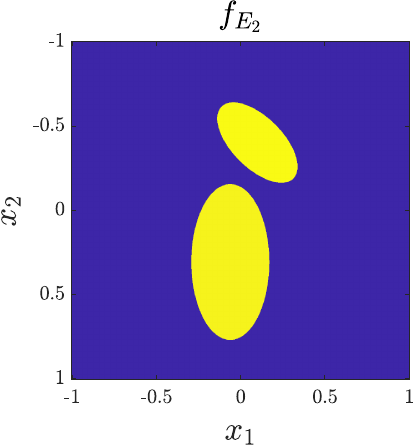} \arrow[r, shift left=15, blue] \arrow[ur, start anchor=north east, end anchor=south west, blue] & \includegraphics[width=0.175\linewidth]{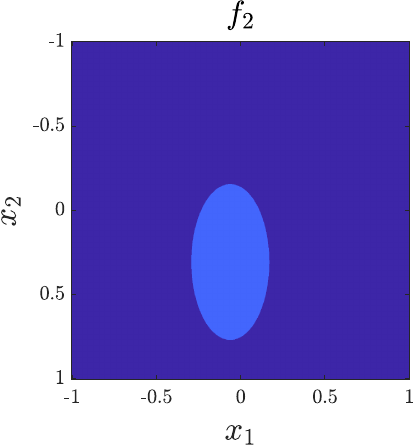}
	\end{tikzcd}
	\caption{Depiction of the reconstruction steps of approach~\ref{Approche2}, where a blue arrow denotes a dependency from a single point and a red  denotes a dependency of the entire function. In the first step we reconstructed for each energy spectral-interval $I_1$ and $I_2$ the spectral-sinograms $v_1$ and $v_2$ for each pair of $(\theta,r)$ individually based on the corresponding points in bin-sinograms $Y_1$ and $Y_2$, then we obtain spectral absorption function $f_{E_1}$ from all of $v_1$ and $f_{E_2}$ from  $v_2$. In the last step we can calculate the material-densities $f_1$ and $f_2$ from  $f_{E_1}$ and  $f_{E_2}$ for each point $(x_1,x_2)$ individually.}
	\label{fig:ThreeStep_2}
\end{figure} 

The results of the individual steps of approach~\ref{Approche2} are shown in figure~\ref{fig:ThreeStep_2}.
The spectral absorption functions $f_{E_1},\dots,f_{E_{\nspec}}$ we obtained in the second step give us the ability to reconstruct $\mu$ for energy values between $E_1$ and $\emax$ as
\begin{align*}
\mu(x,E) =  E^{-c} \cdot \left(  \sum_{i=1}^{\nspec}  \mathds{1}_{I_i} (E) \cdot  f_{E_i}(x) \right).
\end{align*}

\section{Bin-sinogram decomposition problem}
\label{sec:bin-sino-decomp}
The modelling of the attenuation spectra and specifically \eqref{eq:Phi_Psi_A} allows us to investigate the bin-sinogram decomposition problem and the bin-sinogram function $\Phi$ describing the problem in two distinct parts. As the material-energy decomposition is a linear problem, we mainly have to understand the non-linear bin-reading function $\Psi$ and the respective bin-decomposition problem.

\subsection{Bin-decomposition problem}
To investigate the underlying properties of the bin-decomposition problem we will assume that $\nbin = \nspec$. In the simplest case, where each bin is only dependent on a single energy interval $I_i$ we have the following result.
\begin{theorem}\label{theo:diffeomorph_ideal}
Let $\nbin = \nspec$ and the sensitivity functions satisfy $\int_{I_i}s_b(E)  \mathrm{d}E \neq 0$ if and only if $ b = i$, then $\Psi : \R^{\nspec} \to \R_{>0}^{\nbin}$ is a $\mathcal{C}^1$-diffeomorphism.
\end{theorem}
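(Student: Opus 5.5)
The plan is to show that, under the stated support condition, $\Psi$ decouples into $\nspec$ independent scalar maps, one per coordinate. Each of these is then a $\mathcal{C}^1$-diffeomorphism of $\R$ onto $\R_{>0}$.

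\textbf{Step 1 (decoupling).} Since $s_b\geq 0$, the hypothesis $\int_{I_i} s_b(E)\,\mathrm{d}E = 0$ for $i\neq b$ forces $s_b=0$ almost everywhere on $I_i$. Hence in \eqref{eq:Psi} only the summand $i=b$ survives, and
\begin{align*}
	\Psi_b(v) = g_b(v_b), \qquad g_b(t) := \int_{I_b} s_b(E)\exp\bigl(-E^{-c} t\bigr)\,\mathrm{d}E .
\end{align*}
So $\Psi = g_1\times\dots\times g_{\nspec}$ acts coordinatewise. A product of $\mathcal{C}^1$-diffeomorphisms $\R\to\R_{>0}$ is a $\mathcal{C}^1$-diffeomorphism $\R^{\nspec}\to\R^{\nbin}_{>0}$, since its Jacobian is diagonal and its inverse is the product of the inverses. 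It therefore suffices to treat a single $g_b$.

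\textbf{Step 2 (regularity and monotonicity of $g_b$).} We use that $I_b\subset[E_1,\emax)$ with $E_1>0$, since binding energies are positive. Hence $E^{-c}\in(\emax^{-c},E_1^{-c}]$ is bounded above and bounded away from zero on $I_b$. We also assume, as is implicit in the model, that $s_b\in L^1$. For $t$ in a bounded interval $[-T,T]$, the integrand and its $t$-derivative $-E^{-c}s_b(E)e^{-E^{-c}t}$ are dominated by $E_1^{-c}s_b(E)e^{E_1^{-c}T}$. By dominated convergence, $g_b$ is therefore $\mathcal{C}^1$ with
\begin{align*}
	g_b'(t) = -\int_{I_b} E^{-c}s_b(E)\exp\bigl(-E^{-c}t\bigr)\,\mathrm{d}E .
\end{align*}
The hypothesis $\int_{I_b}s_b\neq 0$ means $s_b>0$ on a set of positive measure in $I_b$, so $g_b'(t)<0$ for all $t$ and $g_b(t)>0$.

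\textbf{Step 3 (range, the main point to check).} We must show $g_b(\R)=\R_{>0}$. As $t\to+\infty$, the integrand tends to $0$ pointwise and is dominated by $s_b$ for $t\geq0$, so $g_b(t)\to0$. As $t\to-\infty$, we have $e^{-E^{-c}t}\geq e^{\emax^{-c}|t|}$, which gives $g_b(t)\geq e^{\emax^{-c}|t|}\int_{I_b}s_b\to\infty$. By continuity and strict monotonicity, $g_b$ is a bijection $\R\to\R_{>0}$. Because $g_b'\neq0$ everywhere, the one-dimensional inverse function theorem gives a $\mathcal{C}^1$ inverse.

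Combining the three steps with the product argument from Step 1 yields the claim. The only delicate points are the uniform bounds on $E^{-c}$, which rely on $E_1>0$, and the integrability of $s_b$. These justify differentiating under the integral and taking the limits; everything else is elementary.
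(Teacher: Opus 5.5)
Your proof is correct, but it takes a more elementary route than the paper.

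The paper applies Hadamard's global inverse function theorem to $\ln\Psi\colon\R^{\nspec}\to\R^{\nbin}$. After the same decoupling $\Psi_b(v)=\int_{I_b}s_b(E)\exp(-E^{-c}v_b)\,\mathrm{d}E$, it does three things:
\begin{itemize}
\item it notes that the Jacobian of $\ln\Psi$ is diagonal with $J(\ln\Psi)_{b,b}\le -E_{b+1}^{-c}$, so $|\det J(\ln\Psi)|\ge\prod_i E_{i+1}^{-c}$;
\item it checks properness through $|\ln\Psi_i(v)|\to\infty$ as $|v_i|\to\infty$;
\item it passes from $\ln\Psi$ back to $\Psi$.
\end{itemize}

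You instead exploit the decoupling fully. Each $g_b$ is a strictly decreasing $\mathcal{C}^1$ function with $g_b(t)\to\infty$ as $t\to-\infty$ and $g_b(t)\to 0$ as $t\to+\infty$, hence a bijection onto $(0,\infty)$. The one-dimensional inverse function theorem plus the product structure then finishes the argument.

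Once the map acts coordinatewise, a global inversion theorem is more than is needed, so your version is self-contained. It also makes explicit the assumptions $s_b\in L^1$ and $E_1>0$ that justify differentiating under the integral sign. The paper passes over this point by simply asserting that $\ln\Psi$ is $\mathcal{C}^1$.

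What the paper's route records in addition is a uniform lower bound on the logarithmic derivative. This makes the inverse of $\ln\Psi$ globally Lipschitz, which is the kind of estimate that feeds into stability results. Your Step~2 yields the same information in one line, since $|g_b'(t)|\ge\emax^{-c}\,g_b(t)$.
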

\begin{proof}
To prove this we can use the Hadamard global inverse function theorem~\cite{Ruzhansky2014inversion}. For this we consider $\ln ( \Psi)$ and have to prove that it is a $C^1$-map with non vanishing Jacobian and proper. As a combination and summation of differential functions $\ln ( \Psi)$ is a $C^1$-map. Then for a point $v \in \R^{\nspec}$ we get, because of the $ s_b(E) \geq0$ and following from the assumption that  $\int_{I_i}s_b(E)  \mathrm{d}E \neq 0$ if and only if $ b = i$, that $\Psi_b(v) = \int\limits_{I_b} s_b(E) \exp\Big( -  E^{-c} \cdot v_i \Big)  \mathrm{d}E $ and for the Jacobian at $v$
\begin{align*}
	J(\ln \Psi)_{b,i}  = \frac{\partial}{\partial v_i} [ \ln \Psi_b(v)]&= -\left(\Psi_b(v)\right)^{-1} \int\limits_{I_i} s_b(E) E^{-c} \exp\Big( -  E^{-c} \cdot v_i \Big)  \mathrm{d}E\\
	& \leq -  E_{i+1}^{-c} \frac{ \int\limits_{I_i} s_b(E) \exp\Big( -  E^{-c} \cdot v_i \Big)  \mathrm{d}E }{\int\limits_{I_b} s_b(E) \exp\Big( -  E^{-c} \cdot v_i \Big)  \mathrm{d}E}
\end{align*}
for $b = 1, \dots, \nbin$ and $i = 1, \dots, \nspec$. This gives us that $J(\ln \Psi)_{b,i} \neq 0 $ if and only if $ b = i$  and  $J(\ln \Psi)_{b,b} < 0 $. Since $J(\ln \Psi)$ is a diagonal matrix we have $|\det J(\ln \Psi)| \geq \prod_{i=1}^{\nspec} E_{i+1}^{-c}$. This holds for all $v \in \R^{\nspec}$. To show that $\ln ( \Psi)$ is proper we look at $|v| \to \infty$. In this case we must also have a $i = 1,\dots,\nspec$ with $|v_i| \to \infty$. For $\ln (\Psi_i)$ we then get
\begin{align*}
	\lim\limits_{ |v_i| \to \infty} \left| \ln (\Psi_i(v))  \right| &=	\lim\limits_{ |v_i| \to \infty} \left| \ln \left( \int\limits_{I_i} s_i(E) \exp\Big( -  E^{-c} \cdot v_i \Big) \, \mathrm{d}E \right)\right| \\
	&\geq \lim\limits_{ |v_i| \to \infty} \left| \ln \left( \exp\Big( -  E_{i+1}^{-c} \cdot v_i \Big)  \int\limits_{I_i} s_i(E) \mathrm{d}E \right) \right| \\
	&\geq \lim\limits_{ |v_i| \to \infty} \left| E_{i+1}^{-c} v_i \right|- \left| \ln \left( \int\limits_{I_i} s_i(E) \mathrm{d}E \right) \right| \\
	&= \infty
\end{align*}
and therefore $|\ln (\Psi) | \to \infty$ whenever $|v| \to \infty$. Theorem 2.2 from~\cite{Ruzhansky2014inversion} then gives us that $\ln(\Psi): \R^{\nspec} \to \R^{\nbin}$ is a $\mathcal{C}^1$-diffeomorphism and therefore also that  $\Psi : \R^{\nspec} \to \R_{>0}^{\nbin}$ is a $\mathcal{C}^1$-diffeomorphism.
\end{proof}
Such demands on the sensitivity functions can be given under the assumption of ideal detector responses. In the more realistic case the sensitivity functions overlap~\cite{Tang2022}. For this case we assume that the sensitivity functions follow the scheme that $ \supp (s_b) = [E, \emax ]$ for some energy value $E$ dependent on the specific bin. This gives us the following statement regarding uniqueness.
\begin{theorem}\label{theo:uniqueness_ideal}
Let $\nbin = \nspec$. If the sensitivity functions satisfy that $\int_{I_i}s_b(E)  \mathrm{d}E \neq 0$ if and only if $ b \leq i$, then $\Psi $ is injective.
\end{theorem}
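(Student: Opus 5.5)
The structure of the sensitivity functions makes $\Psi$ triangular: bin $b$ sees only the intervals $I_i$ with $i \geq b$. Writing $w_{b,i}(v_i) := \int_{I_i} s_b(E)\exp(-E^{-c} v_i)\,\mathrm{d}E$, we have
\begin{align*}
	\Psi_b(v) = \sum_{i=b}^{\nspec} w_{b,i}(v_i), \qquad b = 1,\dots,\nspec,
\end{align*}
because for $i<b$ the assumption gives $\int_{I_i}s_b = 0$. Since $s_b \geq 0$, this means $s_b = 0$ almost everywhere on $I_i$, so $w_{b,i} \equiv 0$ for every value of $v_i$. I would prove injectivity by back substitution, starting from the last bin and working upwards.

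First I would record the key one-dimensional fact. For $i \geq b$ we have $\int_{I_i} s_b > 0$, so $s_b$ is positive on a subset of $I_i$ of positive measure. On that set $E^{-c}>0$, hence $t \mapsto \exp(-E^{-c}t)$ is strictly decreasing, and so $w_{b,i}$ is strictly decreasing in $v_i$. Finiteness of the integrals is no issue: $I_i \subset [E_1,\emax)$ with $E_1>0$, so $E^{-c}$ is bounded on $I_i$. In particular $w_{b,i}$ is injective. Alternatively, the derivative $w_{b,i}'(t) = -\int_{I_i} s_b E^{-c}\exp(-E^{-c}t)\,\mathrm{d}E$ is strictly negative, as in the proof of Theorem~\ref{theo:diffeomorph_ideal}.

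Now suppose $\Psi(v) = \Psi(u)$. The last component is $\Psi_{\nspec}(v) = w_{\nspec,\nspec}(v_{\nspec})$, so strict monotonicity gives $v_{\nspec} = u_{\nspec}$. Inductively, assume $v_i = u_i$ for all $i > b$. Then
\begin{align*}
	w_{b,b}(v_b) = \Psi_b(v) - \sum_{i=b+1}^{\nspec} w_{b,i}(v_i) = \Psi_b(u) - \sum_{i=b+1}^{\nspec} w_{b,i}(u_i) = w_{b,b}(u_b),
\end{align*}
and strict monotonicity of $w_{b,b}$ gives $v_b = u_b$. After $\nspec$ steps we obtain $v = u$.

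There is no real obstacle; the only delicate point is the step from $\int_{I_i} s_b = 0$ to the vanishing of the whole term $w_{b,i}$ for every $v_i$, which uses $s_b \geq 0$. Equivalently, one can note that $J(\ln\Psi)$ is upper triangular with negative diagonal. However, a nonvanishing Jacobian alone would not give global injectivity without the properness argument, so I would rely on the explicit back substitution instead. This only yields injectivity, not surjectivity onto $\R_{>0}^{\nbin}$, which matches the statement being weaker than Theorem~\ref{theo:diffeomorph_ideal}. Surjectivity can fail here, since the range of the $b$-th component is constrained by the values already fixed for $v_{b+1},\dots,v_{\nspec}$.
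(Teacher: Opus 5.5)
Your proof is correct and is essentially the paper's argument. The paper takes the largest index $j$ with $v_j \neq \tilde v_j$ and derives a contradiction from the $j$-th component: the terms with $i<j$ vanish by the support assumption and those with $i>j$ cancel, which is exactly your back substitution phrased by contradiction, and your strict-monotonicity justification (using $s_b \geq 0$ and $\int_{I_b} s_b > 0$) is, if anything, more careful than the paper's one-line claim that the remaining integral is nonzero.
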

\begin{proof}
Assuming that we have $v,\tilde{v} \in \R^{\nspec}$ with $\Psi(v) = \Psi( \tilde{v})$ we denote by $j$ the largest index from $\{ 1,\dots,\nspec\}$ such that $v_j \neq \tilde{v}_j$. Then we have $v_l = \tilde{v}_l$ for all $j < l \leq \nspec$ and for this $j$ it holds that
\begin{align*}
	&\quad \Psi_j(v) -\Psi_j(\tilde{v}) \\
	&= \sum_{i=1}^{\nspec} \int\limits_{I_i} s_j(E) \exp\Big( -  E^{-c} \cdot v_i \Big) \, \mathrm{d}E - \sum_{i=1}^{\nspec} \int\limits_{I_i} s_j(E) \exp\Big( -  E^{-c} \cdot \tilde{v}_i \Big) \, \mathrm{d}E \\
	&=  \sum_{i=j}^{\nspec} \int\limits_{I_i} s_j(E) \exp\Big( -  E^{-c} \cdot v_i \Big) \, \mathrm{d}E - \sum_{i=j}^{\nspec} \int\limits_{I_i} s_j(E) \exp\Big( -  E^{-c} \cdot \tilde{v}_i \Big) \, \mathrm{d}E \\
	&= \int\limits_{I_j} s_j(E) \exp\Big( -  E^{-c} \cdot v_j \Big) \, \mathrm{d}E -  \int\limits_{I_j} s_j(E) \exp\Big( -  E^{-c} \cdot \tilde{v}_j \Big) \, \mathrm{d}E \\
	&=  \int\limits_{I_j} s_j(E) \exp\Big( -  E^{-c} \cdot v_j \Big )\left( 1 - \exp\Big( -  E^{-c} \cdot (\tilde{v}_j -v_j) \Big )\right) \, \mathrm{d}E.
\end{align*}
Since $v_j \neq \tilde{v}_j$ we have $\exp( -  E^{-c} \cdot (\tilde{v}_j -v_j)) \neq 1$, $\Psi_j(v) -\Psi_j(\tilde{v}) \neq 0$ and consequently a contradiction to the assumption. This gives us that for all $y \in  \im(\Psi)$ there exists  only one $v \in \R^{\nspec}$ so that $\Psi (v) = y$.
\end{proof}

While theorem~\ref{theo:uniqueness_ideal} gives conditions under which the bin-decomposition problem has at most one solution, the problem is still ill-posed, as $\Psi : \R^{\nspec} \to \R_{>0}^{\nbin}$ is not surjective in general. This is shown in the following lemma.
\begin{lemma}\label{lem:surjectivity}
Let $\nbin = \nspec$ and the sensitivity functions satisfy that $\int_{I_i}s_b(E)  \mathrm{d}E \neq 0$ if and only if $ b \leq i$, then for every $\tilde{y}_{\nbin} \in \R_{>0}$ there exist $\tilde{y_1} ,\dots, \tilde{y}_{\nbin-1} \in \R_{>0}$ so that $(\tilde{y_1} ,\dots, \tilde{y}_{\nbin}) \notin \im(\Psi)$ and for all $y \in \im(\Psi)$ with $y_{\nbin} = \tilde{y}_{\nbin}$ holds that $y_1 >\tilde{y_1}   ,\dots, y_{\nbin-1}>\tilde{y}_{\nbin-1} $.
\end{lemma}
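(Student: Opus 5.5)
Under the hypothesis of Lemma~\ref{lem:surjectivity}, bin $b$ only sees the intervals $I_i$ with $i \geq b$, so
\begin{align*}
\Psi_b(v) = \sum_{i=b}^{\nspec} \int_{I_i} s_b(E) \exp\big(-E^{-c} v_i\big)\,\mathrm{d}E .
\end{align*}
In particular the last bin depends only on $v_{\nspec}$: $\Psi_{\nbin}(v) = g(v_{\nspec})$ with $g(t) = \int_{I_{\nspec}} s_{\nbin}(E)\exp(-E^{-c}t)\,\mathrm{d}E$. The function $g$ is strictly decreasing and continuous, and it maps $\R$ onto $(0,\infty)$. Indeed, $E^{-c} \geq \emax^{-c} > 0$ on $I_{\nspec}$, which gives $g(t)\to\infty$ as $t\to-\infty$ and $g(t)\to 0$ as $t\to\infty$ (dominated convergence for the latter). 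So for a given $\tilde y_{\nbin}>0$ there is exactly one $t^\ast$ with $g(t^\ast)=\tilde y_{\nbin}$. Every $y\in\im(\Psi)$ with $y_{\nbin}=\tilde y_{\nbin}$ therefore comes from some $v$ with $v_{\nspec}=t^\ast$ and all other $v_i$ free.

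Next I bound the remaining coordinates from below on this fibre. Fix $b<\nbin$. Every summand in $\Psi_b(v)$ is strictly positive or zero, and the term $i=\nspec$ is fixed by $v_{\nspec}=t^\ast$:
\begin{align*}
\Psi_b(v) \;\geq\; \int_{I_{\nspec}} s_b(E)\exp\big(-E^{-c}t^\ast\big)\,\mathrm{d}E =: c_b .
\end{align*}
Here $c_b>0$ because $\int_{I_{\nspec}} s_b\,\mathrm{d}E\neq 0$ (since $b\leq\nspec$) and $s_b\geq 0$. Moreover, the other summands $i=b,\dots,\nspec-1$ include the term $i=b$, which is strictly positive because $\int_{I_b}s_b\neq0$. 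Hence $\Psi_b(v) > c_b$ strictly for every $v$ in the fibre.

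So I set $\tilde y_b := c_b$ for $b=1,\dots,\nbin-1$. Then every $y\in\im(\Psi)$ with $y_{\nbin}=\tilde y_{\nbin}$ satisfies $y_b>\tilde y_b$ for all $b<\nbin$. This also shows $(\tilde y_1,\dots,\tilde y_{\nbin})\notin\im(\Psi)$: such a preimage would have to lie in the fibre, but there $y_1>\tilde y_1$. All $\tilde y_b$ are positive, as the statement requires.

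The main step to get right is the strict positivity: that the interval $I_b$ contributes a strictly positive amount, and that the infimum $c_b$ is never attained. This uses only $s_b\geq 0$, $\int_{I_b}s_b>0$, and the positivity of the exponential, with no limit argument needed. The only other delicate point is the surjectivity of $g$ onto $(0,\infty)$. This is what guarantees the fibre is nonempty, and it follows from the monotonicity of $g$ together with its limits as $t\to\pm\infty$. If $\nbin=1$ the claim is vacuous apart from this surjectivity.
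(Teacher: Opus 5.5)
Your proof is correct and is essentially the paper's own argument: the last bin pins down $v_{\nspec}$ uniquely, each $\tilde y_b$ is the $I_{\nspec}$-contribution $\int_{I_{\nspec}} s_b(E)\exp(-E^{-c}t^\ast)\,\mathrm{d}E$, and the strict inequality comes from the remaining nonnegative summands; you also supply details the paper leaves implicit, namely bijectivity of $g$ onto $(0,\infty)$ and strictness via the $I_b$ term. One correction to your closing remark: for $\nbin=1$ the lemma is not vacuous but false, since surjectivity of $g$ gives $(\tilde y_1)\in\im(\Psi)$, so the statement tacitly requires $\nbin\geq 2$.
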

\begin{proof}
For $j = 1,\dots,\nspec-1$ let $\tilde{y}_j$ be given as
\begin{align*}
	\tilde{y}_j  &=  \int\limits_{I_{\nbin}} s_{j}(E) \exp\Big( -  E^{-c} \cdot \tilde{v}_{\nbin} \Big) \, \mathrm{d}E > 0,
\end{align*}
where $\tilde{v}_{\nbin} \in \R$ is the unique solution to
\begin{align*}
	\tilde{y}_{\nbin} &=  \int\limits_{I_{\nbin}} s_{\nbin}(E) \exp\Big( -  E^{-c} \cdot \tilde{v}_{\nbin} \Big) \, \mathrm{d}E.
\end{align*} 
From this follows also that for all  $y \in \im(\Psi)$ with $y_{\nbin} = \tilde{y}_{\nbin}$ and their corresponding solutions $v \in \R^{\nspec}$ it must hold that $v_{\nbin} = \tilde{v}_{\nbin}$. For $j = 1,\dots,\nspec-1$ we then have
\begin{align*}
	y_j &= \sum_{i=1}^{\nspec} \int\limits_{I_i} s_j(E) \exp\Big( -  E^{-c} \cdot v_i \Big) \, \mathrm{d}E \\
	&> \int\limits_{I_{\nbin}} s_j(E) \exp\Big( -  E^{-c} \cdot v_{\nbin} \Big) \, \mathrm{d}E \\
	&=  \int\limits_{I_{\nbin}} s_j(E) \exp\Big( -  E^{-c} \cdot \tilde{v}_{\nbin} \Big) \, \mathrm{d}E \\
	&= \tilde{y}_j.
\end{align*}
This implies directly that $(\tilde{y_1} ,\dots, \tilde{y}_{\nbin}) \notin \im(\Psi)$.
\end{proof}
In the same way one can show that if for some $j = 2,\dots,\nspec-1$ and $y_j$ we can find $\tilde{y_i}$ for $i = 1,\dots,j-1$ so that we have $y_1 >\tilde{y_1}   ,\dots, y_{j-1}>\tilde{y}_{j-1} $ for all  $y \in \im(\Psi)$ with $y_{j} = \tilde{y}_{j}$. Moreover, in the case that multiple entries of $y \in \im(\Psi)$ are known, the lower bounds for the rest of the entries can be given by the summation of the bounds from the individual entries.

Since $\nbin = \nspec$ will not hold in general, we investigate the case $\nbin > \nspec$ next. For this we can obtain uniqueness, if a subset of the sensitivity functions satisfies the requirements of theorem~\ref{theo:uniqueness_ideal}.
\begin{theorem}\label{theo:uniqueness_overdetermined}
Let $\nbin>\nspec$ and assume that for all $i \in \{1,\dots,\nspec\}$ we can find a sensitivity function $s_{b_i}$ so that $\int_{I_j}s_{b_i}(E)  \mathrm{d}E \neq 0$ for all $j\geq i$ and $\int_{I_j}s_{b_i}(E)  \mathrm{d}E = 0$ for all $j<i$. Then for each $y \in  \im(\Psi)$ there exists a unique $v \in \R^{\nspec}$ so that $\Psi (v) = y$.
\end{theorem}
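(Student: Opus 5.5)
The plan is to reduce this to the argument of Theorem~\ref{theo:uniqueness_ideal} by restricting attention to the selected bins. Existence of some $v$ with $\Psi(v)=y$ is immediate, since $y \in \im(\Psi)$. So only uniqueness needs work: we take $v,\tilde v \in \R^{\nspec}$ with $\Psi(v)=\Psi(\tilde v)=y$ and show $v=\tilde v$.

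First, note that the bins $b_1,\dots,b_{\nspec}$ are pairwise distinct. Indeed, $s_{b_i}$ has nonzero integral on $I_i$ but zero integral on every $I_j$ with $j<i$, so $b_i=b_{i'}$ with $i<i'$ would contradict $\int_{I_i}s_{b_{i'}}(E)\,\mathrm{d}E=0$. Form the reduced map
\begin{align*}
\hat\Psi := (\Psi_{b_1},\dots,\Psi_{b_{\nspec}}) : \R^{\nspec}\to\R^{\nspec}_{>0}.
\end{align*}
This is $\Psi$ for a detector with exactly $\nspec$ bins whose sensitivity functions $\hat s_i := s_{b_i}$ satisfy $\int_{I_j}\hat s_i(E)\,\mathrm{d}E\neq 0$ if and only if $i\le j$. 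These are precisely the hypotheses of Theorem~\ref{theo:uniqueness_ideal}, so $\hat\Psi$ is injective.

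Since $\Psi(v)=\Psi(\tilde v)$ implies equality in every component, and in particular $\hat\Psi(v)=\hat\Psi(\tilde v)$, injectivity of $\hat\Psi$ gives $v=\tilde v$. For self-containedness we could also repeat the argument directly. Let $j$ be the largest index with $v_j\ne\tilde v_j$. In $\Psi_{b_j}(v)-\Psi_{b_j}(\tilde v)$ the terms with $i<j$ vanish because $s_{b_j}\ge 0$ has zero integral on those $I_i$, so $s_{b_j}=0$ a.e.\ there. The terms with $i>j$ cancel because $v_i=\tilde v_i$. What remains is $\int_{I_j}s_{b_j}(E)\bigl(e^{-E^{-c}v_j}-e^{-E^{-c}\tilde v_j}\bigr)\,\mathrm{d}E$. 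The integrand has a fixed strict sign and $s_{b_j}$ is nonzero on a set of positive measure in $I_j$, so this difference is nonzero, a contradiction.

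The only delicate point is exactly this last step. We need nonnegativity of $s_b$ to turn ``zero integral on $I_i$'' into ``$s_{b_j}$ vanishes a.e.\ on $I_i$'', and we need the strict monotonicity of $t\mapsto e^{-E^{-c}t}$ for $E>0$ (using $E_1>0$) to get a strict sign. Both are routine, so there is no real obstacle once the relabelling to the square subsystem is made.
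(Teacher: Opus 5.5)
Your proposal is correct and follows essentially the same route as the paper: restrict $\Psi$ to the selected bins $b_1,\dots,b_{\nspec}$ to obtain a square map that satisfies the hypotheses of Theorem~\ref{theo:uniqueness_ideal}, then conclude from its injectivity. Your extra remarks fill in details the paper leaves implicit, namely the distinctness of the $b_i$, the use of $s_b\ge 0$ to get $s_{b_j}=0$ a.e.\ on $I_i$ for $i<j$, and the fixed strict sign of the integrand on $I_j$ (the paper only notes $\exp(-E^{-c}(\tilde v_j-v_j))\neq 1$).
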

\begin{proof}
Assuming that we have $v, \tilde{v} \in \R^{\nspec}$ with $\Psi(v) = \Psi(\tilde{v}) = y$, then for the function $\tilde{\Psi} : \R^{\nspec} \to \R_{>0}^{\nspec}$ with
\begin{align*}
	\tilde{\Psi}_j(v) &= \sum_{i=1}^{\nspec} \int\limits_{I_i} s_{b_j}(E) \exp\Big( -  E^{-c} \cdot v_i \Big) \, \mathrm{d}E
\end{align*}
for $j \in \{1,\dots,\nspec\}$ holds $\tilde{\Psi}(v) = \tilde{\Psi}(\tilde{v}) $. The same argumentation from theorem \ref{theo:uniqueness_ideal} gives us then that $v = \tilde{v}$.
\end{proof}

Lastly, in the case that $\nbin < \nspec$, or when there are energy intervals $I_i$ where there are no sensitivity functions satisfying the requirements from theorem \ref{theo:uniqueness_overdetermined}, we cannot find a unique solution $v \in \R^{\nspec}$ given a $y \in \im(\Psi) $ so that $\Psi(v) = y$. However we can adapt theorem \ref{theo:uniqueness_overdetermined} as follows.
\begin{theorem}\label{theo:uniqueness_underdetermined}
Let $\nbin < \nspec$ and $k \in \{1,\dots,\nspec\}$ be so that for all $i \in \{k,\dots,\nspec\}$ we can find a sensitivity function $s_{b_i}$ with $\int_{I_j}s_{b_i}(E)  \mathrm{d}E \neq 0$ for all $j\geq i$ and $\int_{I_j}s_{b_i}(E)  \mathrm{d}E = 0$ for all $j<i$, then for $v,\tilde{v} \in \R^{\nspec}$ follows from $\Psi(v) = \Psi(\tilde{v})$ that $v_i =  \tilde{v}_i $ for all $i \in \{k,\dots,\nspec\}$.
\end{theorem}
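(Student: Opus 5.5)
The plan is to repeat the backward-induction argument from the proof of theorem~\ref{theo:uniqueness_ideal}, but only over the indices $k,\dots,\nspec$. Let $v,\tilde v \in \R^{\nspec}$ with $\Psi(v)=\Psi(\tilde v)$. Suppose, for contradiction, that $v_i \neq \tilde v_i$ for some $i \in \{k,\dots,\nspec\}$. Let $j$ be the largest such index. Then $k \le j$, $v_j\neq\tilde v_j$, and $v_l=\tilde v_l$ for all $j<l\le\nspec$.

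Next we look at the single component $\Psi_{b_j}$, using the sensitivity function $s_{b_j}$ from the hypothesis. Since $s_{b_j}\ge 0$ and $\int_{I_l} s_{b_j}(E)\,\mathrm{d}E = 0$ for $l<j$, the function $s_{b_j}$ vanishes almost everywhere on $I_1,\dots,I_{j-1}$. Hence
\begin{align*}
\Psi_{b_j}(v) = \sum_{l=j}^{\nspec} \int\limits_{I_l} s_{b_j}(E)\exp\Big(-E^{-c} v_l\Big)\,\mathrm{d}E ,
\end{align*}
and the same holds for $\tilde v$. This is the key point: the unknown components $v_1,\dots,v_{j-1}$, which may include indices below $k$ that are not controlled at all, drop out of $\Psi_{b_j}$.

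The terms with $l>j$ agree for $v$ and $\tilde v$, so exactly as in theorem~\ref{theo:uniqueness_ideal} we obtain
\begin{align*}
0=\Psi_{b_j}(v)-\Psi_{b_j}(\tilde v)=\int\limits_{I_j} s_{b_j}(E)\exp\Big(-E^{-c}v_j\Big)\Big(1-\exp\Big(-E^{-c}(\tilde v_j-v_j)\Big)\Big)\,\mathrm{d}E .
\end{align*}
The factor $1-\exp(-E^{-c}(\tilde v_j-v_j))$ has constant nonzero sign on $I_j$, because $E>0$ there and $\tilde v_j\neq v_j$. Moreover $s_{b_j}\ge 0$ with $\int_{I_j}s_{b_j}(E)\,\mathrm{d}E\neq 0$. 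So the integrand is of one sign and not almost everywhere zero, and the integral is nonzero. This contradiction shows that $v_i=\tilde v_i$ for all $i\in\{k,\dots,\nspec\}$.

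The only step needing care is the sign argument in the last display: it requires that $s_{b_j}$ be nonnegative and not null on $I_j$. The condition $\nbin<\nspec$ plays no role in the argument beyond motivating why uniqueness can only be obtained for the indices $k,\dots,\nspec$.
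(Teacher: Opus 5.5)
Your proof is correct and uses the same argument as the paper. The paper collects $\Psi_{b_k},\dots,\Psi_{b_{\nspec}}$ into a reduced map $\tilde\Psi$ in the variables $v_k,\dots,v_{\nspec}$ and reuses the argument of theorem~\ref{theo:uniqueness_ideal}, whereas you run that backward-index argument directly on $\Psi_{b_j}$ and make explicit two steps the paper leaves implicit: why $v_1,\dots,v_{j-1}$ drop out (since $s_{b_j}\ge 0$ with zero integral there), and why the last integral is nonzero (the factor $1-\exp(-E^{-c}(\tilde v_j-v_j))$ has a fixed strict sign on $I_j$).
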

\begin{proof}
We define the function $\tilde{\Psi} : \R^{\nspec-k+1} \to \R^{\nspec-k+1}_{>0}$ with 
\begin{align*}
	\tilde{\Psi}_j(v) &= \sum_{i=1}^{\nspec-k+1} \int\limits_{I_{i+k-1}} s_{b_{j+k-1}}(E) \exp\Big( -  E^{-c} \cdot w_i \Big) \, \mathrm{d}E
\end{align*}
for $j \in \{1,\dots,\nspec-k+1\}$ and $w \in \R^{\nspec-k+1}$. For  $v, \tilde{v} \in \R^{\nspec}$ with $\Psi(v) = \Psi(\tilde{v}) = y$ we also have $\tilde{\Psi} ( v_k,\dots,v_{\nspec}) = \tilde{\Psi} ( \tilde{v}_k,\dots,\tilde{v}_{\nspec}) $ and therefore $v_i = \tilde{v}_i$ for all $i \in \{ k,\dots,\nspec \}$.
\end{proof}
The scheme for the requirements of the sensitivity functions can also be reversed to represent sensor bins created by using multiple X-ray sources as found in~\cite{Zhou_2024,Sandvold2024}. The bins would then satisfy $ \supp (s_b) \in [0, E ]$ for some bin specific energy value $E$. With this one could obtain the same results as in theorems~\ref{theo:uniqueness_ideal},~\ref{theo:uniqueness_overdetermined}, and~\ref{theo:uniqueness_underdetermined}, and lemma~\ref{lem:surjectivity}. 
With hybrid spectral CT systems as introduced in~\cite{Sandvold2024}, which both include several X-ray sources and sensors one could potentially be able to find combinations of X-ray tubes, sensors and materials, under which theorem~\ref{theo:diffeomorph_ideal} is applicable in a realistic scenario.

\subsection{Material-energy decomposition problem and result for the bin-sinogram decomposition}

In the three step approach we split the bin-sinogram decomposition into the bin decomposition and the material-energy decomposition. As the material-energy decomposition is a linear problem it is uniquely solvable if $A$ from \eqref{eq:A} is injective. In combination with theorem \ref{theo:uniqueness_overdetermined} this leads to the following result.
\begin{theorem}\label{theo:bin_sinogram_decomposition_ideal}
Let  $\ker(A) = \{0\}$ and for all $i \in \{1,\dots,\nspec\}$ let there exist a sensitivity function $s_{b_i}$ so that $\int_{I_j}s_{b_i}(E)  \mathrm{d}E \neq 0$ for all $j\geq i$ and $\int_{I_j}s_{b_i}(E)  \mathrm{d}E = 0$ for all $j<i$. Then $\Phi$ is injective.
\end{theorem}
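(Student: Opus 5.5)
The plan is to exploit the factorization $\Phi(z) = \Psi(A z)$ from \eqref{eq:Phi_Psi_A}. A composition of two injective maps is injective, so it suffices to show separately that $z \mapsto Az$ and $\Psi$ are injective.

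Take $z, \tilde z \in \R^{\nmat}$ with $\Phi(z) = \Phi(\tilde z)$, and set $v = Az$ and $\tilde v = A\tilde z$ in $\R^{\nspec}$. Then $\Psi(v) = \Psi(\tilde v) =: y$, so $y \in \im(\Psi)$.

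\textbf{Injectivity of $\Psi$.} The hypothesis on the sensitivity functions is exactly the hypothesis of theorem~\ref{theo:uniqueness_overdetermined}: for each $i$ there is a bin $b_i$ whose sensitivity function is supported on $I_i \cup \dots \cup I_{\nspec}$ and has nonzero mass on each of these intervals. Theorem~\ref{theo:uniqueness_overdetermined} is stated for $\nbin > \nspec$, but its proof only uses the selected bins $b_1, \dots, b_{\nspec}$. These bins are necessarily distinct, because $s_{b_i}$ vanishes on $I_j$ for $j<i$ while $s_{b_j}$ does not. Hence they form the square subsystem $\tilde\Psi$, which satisfies the hypotheses of theorem~\ref{theo:uniqueness_ideal} after reindexing. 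The backward induction of that proof then applies verbatim. We take the largest index $j$ with $v_j \neq \tilde v_j$ and compare the $b_j$-th components. Only the interval $I_j$ contributes to their difference, and the integrand $s_{b_j}(E)\exp(-E^{-c}v_j)\bigl(1-\exp(-E^{-c}(\tilde v_j - v_j))\bigr)$ has a fixed sign and is nonzero on a set of positive $s_{b_j}$-measure. This gives a contradiction, so $v = \tilde v$. In the case $\nbin = \nspec$ we simply invoke theorem~\ref{theo:uniqueness_ideal} directly.

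\textbf{Injectivity of $A$.} From $Az = A\tilde z$ we get $A(z - \tilde z) = 0$, and $\ker(A) = \{0\}$ gives $z = \tilde z$. Therefore $\Phi$ is injective.

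The only point needing care is the first step. We must justify that the hypothesis can be applied regardless of whether $\nbin = \nspec$ or $\nbin > \nspec$. We also need the sign argument in theorem~\ref{theo:uniqueness_ideal} to really yield a nonzero integral, which uses $s_{b_j} \ge 0$ with $\int_{I_j} s_{b_j} \neq 0$. Beyond that, the proof is a direct combination of earlier results.
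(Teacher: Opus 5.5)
Your proof is correct and follows the paper's argument: you factor $\Phi = \Psi\circ A$, get injectivity of $\Psi$ from theorem~\ref{theo:uniqueness_overdetermined} (via the backward-induction argument of theorem~\ref{theo:uniqueness_ideal}), and get injectivity of $A$ from $\ker(A)=\{0\}$. You also observe that the selected bins $b_1,\dots,b_{\nspec}$ must be distinct, so the case $\nbin=\nspec$ is covered too; this fills a small gap the paper passes over, since theorem~\ref{theo:uniqueness_overdetermined} is stated only for $\nbin>\nspec$.
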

\begin{proof}
Theorem \ref{theo:uniqueness_overdetermined} gives us that $\Psi$ is injective. Since $\ker(A) = \{0\}$ we also know that $A$ is injective. Therefore $\Psi\circ A$ is also injective. Because of \eqref{eq:Phi_Psi_A} this then holds also for $\Phi$.
\end{proof}
If the sensitivity functions do not meet the requirements from theorems \ref{theo:uniqueness_overdetermined} and \ref{theo:bin_sinogram_decomposition_ideal} we can apply theorem \ref{theo:uniqueness_underdetermined}.	For this we define the reduced form of $A$ as $A^{(k)}  \in \R^{\nspec -k+1\times \nmat} $ with
\begin{align}
\label{eq:A_reduced}
A^{(k)} = \begin{pmatrix}
	a_1^{k} & \cdots & a_{\nmat}^{k} \\ \vdots & & \vdots \\ a_1^{\nspec} & \cdots & a_{\nmat}^{\nspec}
\end{pmatrix}.
\end{align}
With this we can state:
\begin{theorem}\label{theo:bin_sinogram_decomposition_non_ideal}
In case there exists a $k \in \{1,\dots,\nspec\}$ so that $\ker(A^{(k)}) = \{0\}$ and for all $i \in \{k,\dots,\nspec\}$ we can find a sensitivity function $s_{b_i}$ with $\int_{I_j}s_{b_i}(E)  \mathrm{d}E \neq 0$ for all $j\geq i$ and $\int_{I_j}s_{b_i}(E)  \mathrm{d}E = 0$ for all $j<i$, it holds for all  $y \in \im(\Phi)$ that there only exists one $z \in \R^{\nmat}$ so that $\Phi(z) = y$.
\end{theorem}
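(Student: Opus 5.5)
The plan is to reduce the statement to Theorem~\ref{theo:uniqueness_underdetermined} combined with the injectivity of the reduced matrix $A^{(k)}$, in the same way Theorem~\ref{theo:bin_sinogram_decomposition_ideal} combined Theorem~\ref{theo:uniqueness_overdetermined} with $\ker(A)=\{0\}$. Take $z,\tilde z\in\R^{\nmat}$ with $\Phi(z)=\Phi(\tilde z)=y$. By \eqref{eq:Phi_Psi_A} this means $\Psi(Az)=\Psi(A\tilde z)$. Set $v=Az$ and $\tilde v=A\tilde z$, both in $\R^{\nspec}$.

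The first step applies the argument of Theorem~\ref{theo:uniqueness_underdetermined} to $v$ and $\tilde v$. That theorem is stated under the hypothesis $\nbin<\nspec$, but its proof never uses this inequality. It only uses the existence of the sensitivity functions $s_{b_k},\dots,s_{b_{\nspec}}$ with the prescribed support pattern on the intervals $I_k,\dots,I_{\nspec}$. So I would remark that the conclusion holds for arbitrary $\nbin$.

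One point in that proof needs care. For $j\ge k$ we have
\begin{align*}
y_{b_j}=\sum_{i=j}^{\nspec}\int_{I_i}s_{b_j}(E)\exp\bigl(-E^{-c}v_i\bigr)\,\mathrm{d}E,
\end{align*}
because $s_{b_j}\ge 0$ and $\int_{I_i}s_{b_j}\,\mathrm{d}E=0$ for $i<j$ force $s_{b_j}=0$ almost everywhere on those $I_i$. Hence the components $v_1,\dots,v_{k-1}$ drop out. The backward induction from Theorem~\ref{theo:uniqueness_ideal} then runs on the indices $j=\nspec,\dots,k$. At each stage the difference reduces to a single integral over $I_j$ whose integrand is nonzero with a fixed sign on a set of positive measure, because $\int_{I_j}s_{b_j}\,\mathrm{d}E>0$. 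This gives $v_i=\tilde v_i$ for all $i\in\{k,\dots,\nspec\}$.

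The second step observes that the rows $k,\dots,\nspec$ of $A$ form exactly $A^{(k)}$ from \eqref{eq:A_reduced}. Therefore $(v_k,\dots,v_{\nspec})^T=A^{(k)}z$ and likewise for $\tilde z$, and the first step gives $A^{(k)}z=A^{(k)}\tilde z$. Since $\ker(A^{(k)})=\{0\}$, we get $z=\tilde z$. So for every $y\in\im(\Phi)$ there is exactly one preimage, which is the claim.

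The only real obstacle is the applicability of Theorem~\ref{theo:uniqueness_underdetermined} when $\nbin\ge\nspec$, given its stated hypothesis $\nbin<\nspec$. I would handle this by re-running its short proof verbatim, noting that it uses only the chosen bins $b_k,\dots,b_{\nspec}$. Alternatively, for $k=1$ one can invoke Theorem~\ref{theo:bin_sinogram_decomposition_ideal} directly. Everything else is a direct composition argument.
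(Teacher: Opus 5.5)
Your proof is correct and follows the paper's argument: rewrite $\Phi(z)=\Phi(\tilde z)$ as $\Psi(Az)=\Psi(A\tilde z)$, use Theorem~\ref{theo:uniqueness_underdetermined} to get $A^{(k)}z=A^{(k)}\tilde z$, and conclude $z=\tilde z$ from $\ker(A^{(k)})=\{0\}$. Your extra remarks fill in points the paper leaves implicit without changing the route: the hypothesis $\nbin<\nspec$ of Theorem~\ref{theo:uniqueness_underdetermined} is never used in its proof, and $s_{b_j}\ge 0$ forces $s_{b_j}=0$ almost everywhere on $I_i$ for $i<j$, so $v_1,\dots,v_{k-1}$ drop out.
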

\begin{proof}
For a given $y \in \im(\Phi)$ let $z, \tilde{z} \in \R^{\nmat}$ satisfy $y = \Phi(z) = \Phi(\tilde{z})$. For $v = Az$ and $\tilde{v} = A \tilde{z}$ then holds $y = \Psi(v) = \Psi(\tilde{v})$. Thanks to theorem \ref{theo:uniqueness_underdetermined} we know that $v_i = \tilde{v}_i$ for all $i \in \{ k,\dots,\nspec\}$ and therefore that $A^{(k)} z = A^{(k)} \tilde{z}$. Since $A^{(k)}$ is injective we must have $z =  \tilde{z}$.
\end{proof}

\section{Numerical example}
\label{sec:Num_Ex}        
To demonstrate the applicability of the three step approach, we first describe the procedure on a small two detector bin two material (iodine and gadolinium) decomposition problem shown in figure~\ref{fig:ThreeStepStart}. After that we do further numerical experiments in which we show how ignoring Compton scattering effects the results, which impact the choice of $c$ makes, and the reconstructions under the scenarios that the support of the bins does not satisfy the uniqueness criterion. For all experiments we used the data provided in~\cite{Mory_2018,Mory_2019_Git}.
\begin{figure}[t]
\centering
\includegraphics[width=\linewidth]{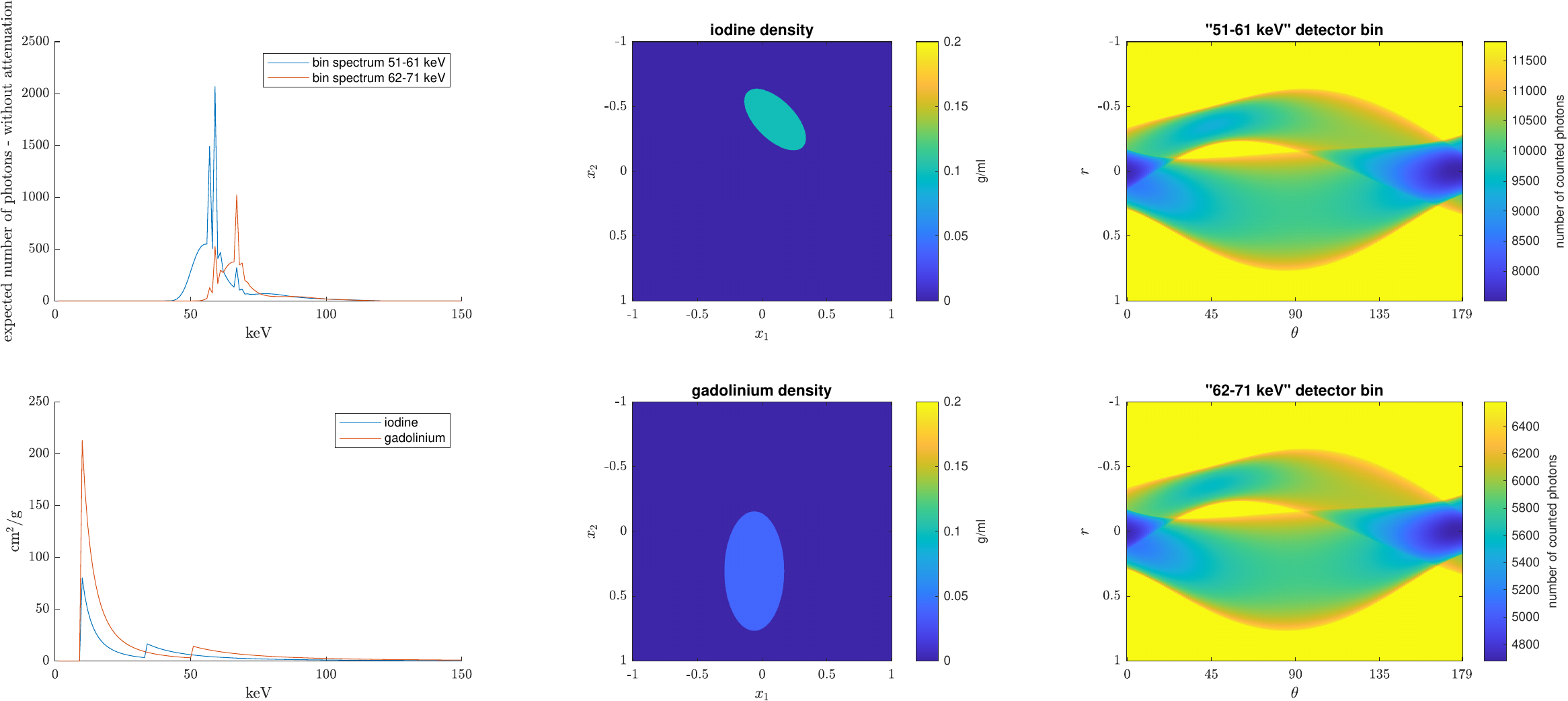}
\caption{Data for the material decomposition problem, where the detector response spectra and the material attenuation spectra are out of the provided data from~\cite{Mory_2018,Mory_2019_Git}.}
\label{fig:ThreeStepStart}
\end{figure}

From the detector bins and material attenuation spectra follows that $\nbin = 2$ and $\nspec = 3$ with $s_1$ represented by the "$51-61\text{keV}$" detector bin and $s_2$ by  "$62-71\text{keV}$". The intervals given by the binding energies of the electrons are $I_1 = [10 \text{keV}, 34 \text{keV})$, $I_2 = [34 \text{keV}, 51 \text{keV})$ and  $I_3 = [51 \text{keV}, 150 \text{keV})$. For this the detector bin "$51-61\text{keV}$"  satisfies $\int_{I_1} s_1(E) \mathrm{d}E = 0$, $\int_{I_2} s_1(E) \mathrm{d}E \neq 0$ and $\int_{I_3} s_1(E) \mathrm{d}E \neq 0$. For the second bin "$62-71\text{keV}$" we have $\int_{I_1} s_2(E) \mathrm{d}E = 0$, $\int_{I_2} s_2(E) \mathrm{d}E = 0$ and $\int_{I_3} s_2(E) \mathrm{d}E \neq 0$. Theorem \ref{theo:uniqueness_underdetermined} gives us therefore that we can reconstruct the second and third energy interval. A value of $c=2.6$ best fits our data. The matrix $A$ is then given by
\begin{align*}
A = \begin{pmatrix}
	0.8469 & 0.3193 \\ 0.8469 & 1.5903 \\ 3.8967 & 1.5903
\end{pmatrix} \cdot 10^{5},
\end{align*}
and the reduced matrix $A^{(2)}$
as
\begin{align*}
A^{(2)} = \begin{pmatrix}
	0.8469 & 1.5903 \\ 3.8967 & 1.5903
\end{pmatrix} \cdot 10^{5}.
\end{align*}
Since $\ker\left(A^{(2)}\right) = \{0\}$ theorem \ref{theo:bin_sinogram_decomposition_non_ideal} gives us that the bin-sinogram decomposition problem is injective.

\begin{figure}
\centering
\includegraphics[width=\linewidth]{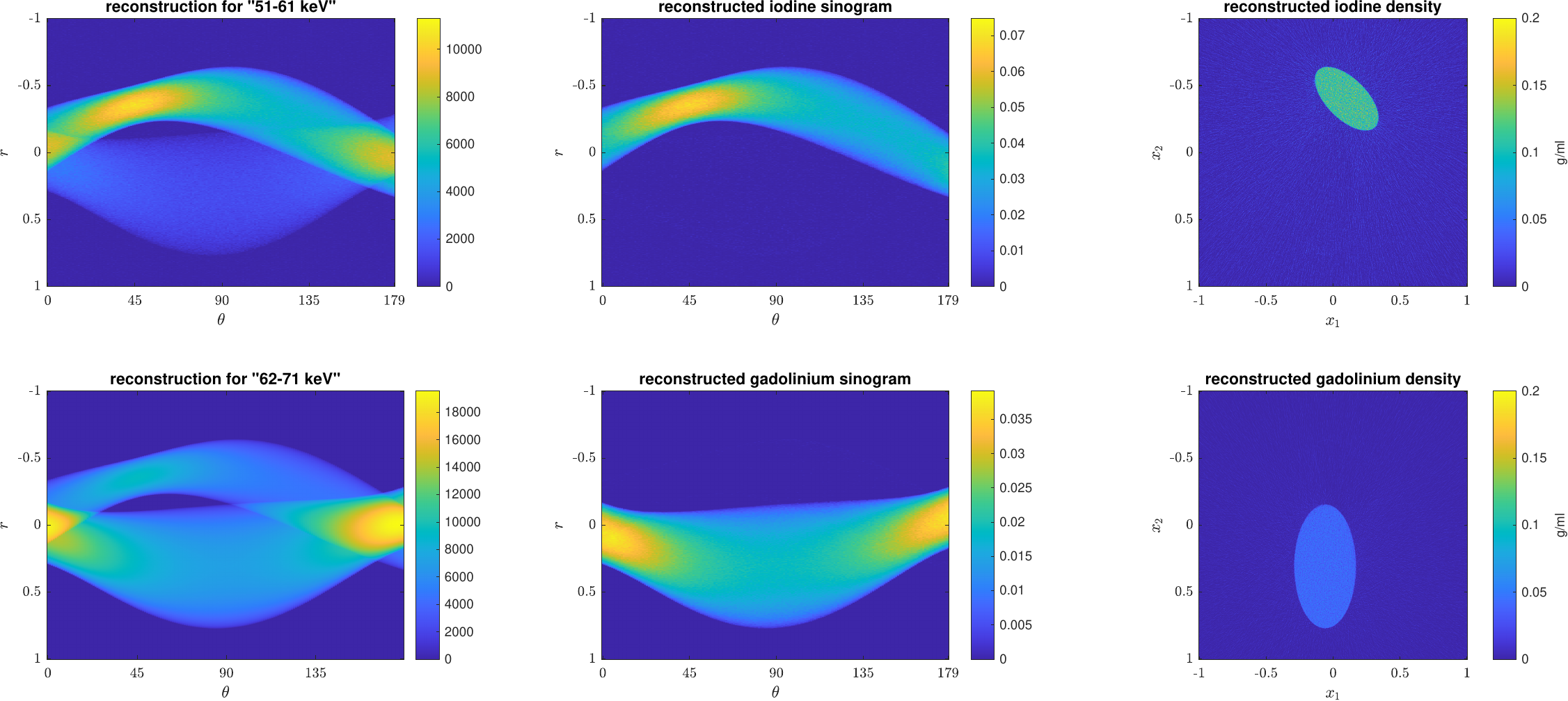}
\caption{The three reconstruction steps of approach~\ref{Approche1} using algorithm~\ref{alg:RecursiveBinDecomposition} for the material decomposition problem depicted in figure~\ref{fig:ThreeStepStart}.}
\label{fig:ThreeStepRec}
\end{figure} 
\begin{algorithm}
\caption{(Recursive bin-decomposition)}
\label{alg:RecursiveBinDecomposition}
\begin{algorithmic}[1]
	\Require Signal $Y \in \R^{\nbin}$ with the according $s_{b_i}$ for all $i = k,\dots,\nspec$ as in theorem \ref{theo:uniqueness_underdetermined}, reduced matrix $A^{(k)}$
	\medskip
	\For{$j = \nspec , \dots , k$}
	\State Solve $Y_{b_j}(\ell) =  \int_{I_j} s_{b_j}(E) \exp ( -E^{-c} \cdot v_j)$ for $v_j$ 
	\State Set $Y_{b_i}(\ell) = Y_{b_i}(\ell) -\int_{I_j} s_{b_i}(E) \exp ( -E^{-c} \cdot v_j) $ for all $i = k,\dots,j-1$
	\EndFor
	\Ensure $v_k,\dots,v_{\nspec}$
\end{algorithmic}
\end{algorithm}

For the reconstruction we used approach~\ref{Approche1}, where we reconstructed the bin decomposition problem $\Psi(v(\theta,r)) = Y(\theta,r)$ for each point $(\theta,r)$ via the recursive method described in algorithm~\ref{alg:RecursiveBinDecomposition}. In this we solve the entries of $v(\theta,r)$ from last to first and subtract the results from the still unsolved detector readings.	
The requirements on the sensitivity functions from theorem~\ref{theo:bin_sinogram_decomposition_non_ideal} guarantee us that each step is a one dimensional convex problem, which we solve in our case with Newton's method.
The material-energy decomposition $Az(\theta,r) = v(\theta,r)$ can again for each pair $(\theta,r)$ be solved via the least-squares method.
For the Radon inversion we use the filtered back projection with cosine filter.	
For all reconstructions we utilized $Y_b \in \R^{180 \times 1024}$ with added Gaussian white noise. And to create the data we employed the discrete versions of  $s_b$ and $\mu_m$, which consist of $150$ discrete values from $1 \text{keV}$ to $150 \text{keV}$.
The results and the individual steps are depicted in figure~\ref{fig:ThreeStepRec}.

\begin{figure}[t]
	\centering
	\includegraphics[width=\linewidth]{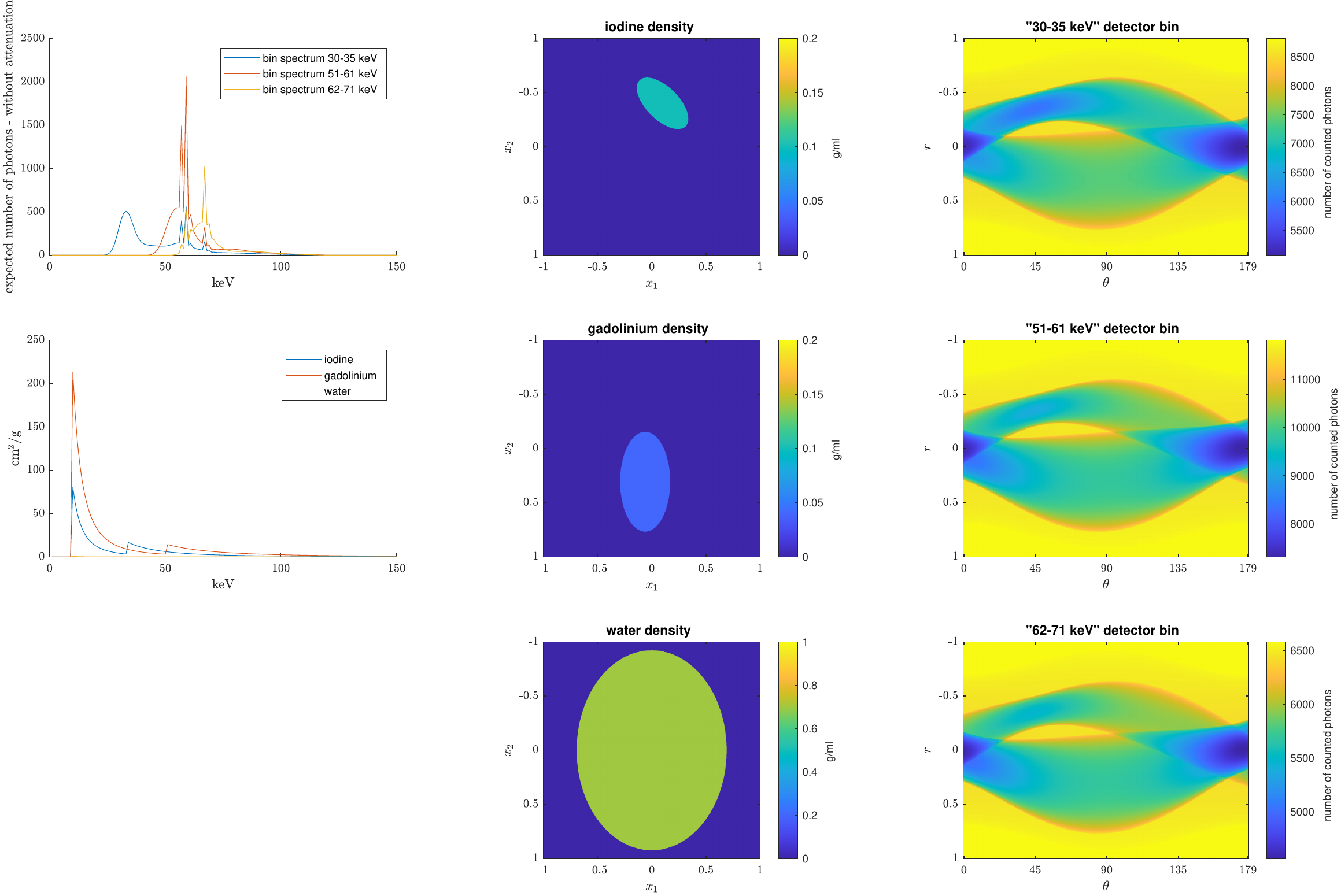}
	\caption{Data for the material decomposition problem with three material, where the detector response spectra and the material attenuation spectra are out of the provided data from~\cite{Mory_2018,Mory_2019_Git}.}
	\label{fig:ThreeStepStart3Mat}
\end{figure}

\begin{figure}[t]
	\centering
	\includegraphics[width=\linewidth]{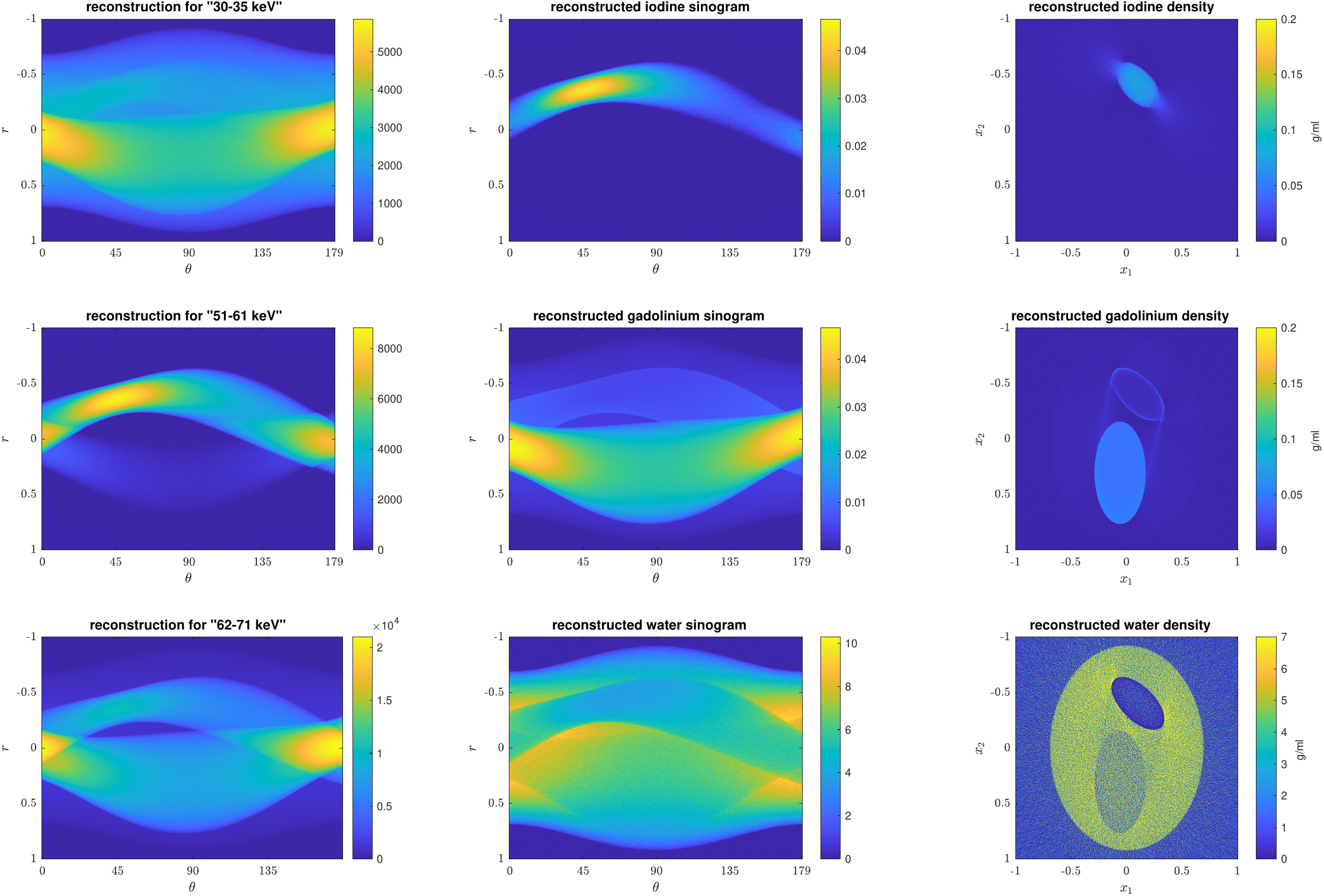}
	\caption{The three reconstruction steps of approach~\ref{Approche1} using algorithm~\ref{alg:RecursiveBinDecomposition} with $c = 2.6$ for the material decomposition problem depicted in figure~\ref{fig:ThreeStepStart}.}
	\label{fig:ThreeStepRec3Mat}
\end{figure}

We will now incorporate a larger sphere of water to the material decomposition problem, as Compton scattering has a larger effect on the absorption properties of water. The new data is depicted in figure~\ref{fig:ThreeStepStart3Mat} with the results in figure~\ref{fig:ThreeStepRec3Mat}. As the absorption of water is not following the absorption curve given by $A$, the material decomposition fails especially for the density of water and the differentiation from iodine and the quantitative value of water.

\begin{figure}[t]
	\begin{enumerate}[label=(\alph*)]
		\item\label{c=2.6Plots}
		\includegraphics[width=0.925\textwidth]{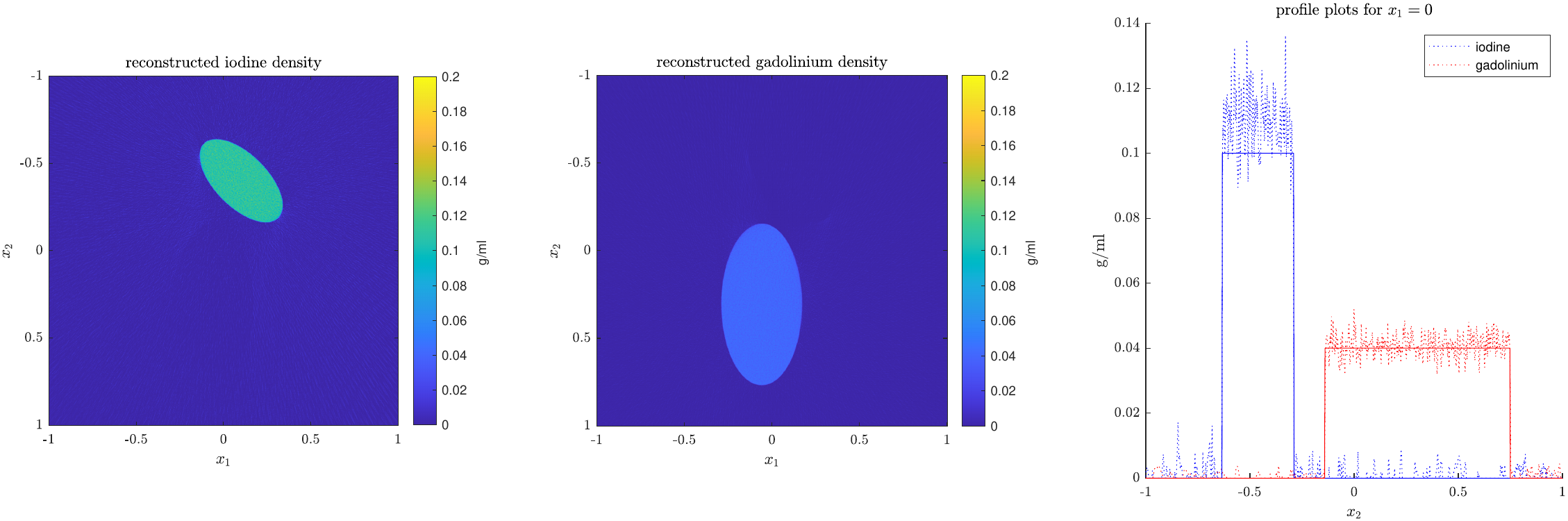}
		\bigskip
		\item\label{c=3.0Plots}
		\includegraphics[width=0.925\textwidth]{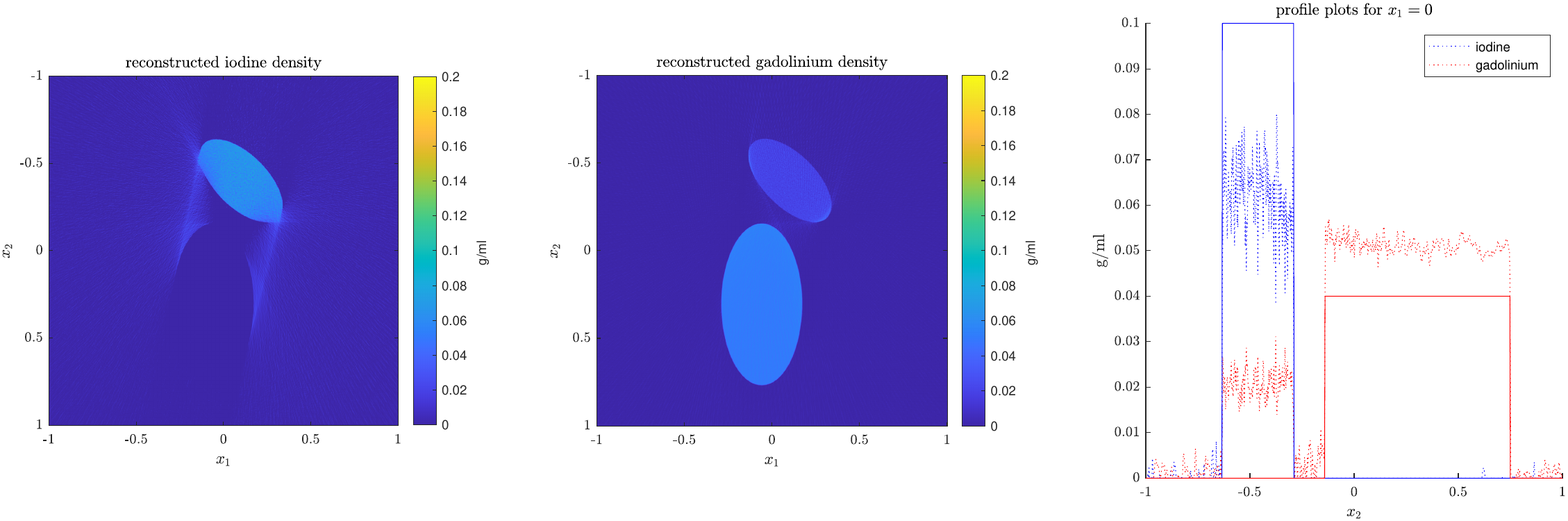}
		\bigskip
		\item\label{c=3.5Plots}
		\includegraphics[width=0.925\textwidth]{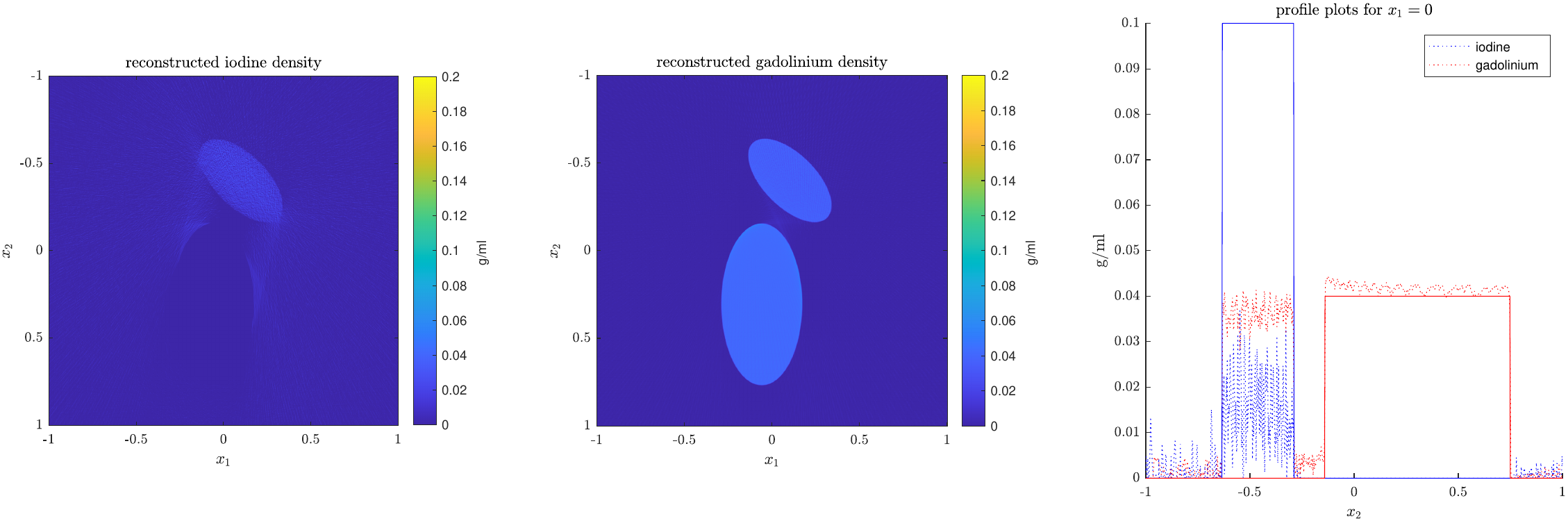}
	\end{enumerate}
	\caption{The three reconstructions of iodine and gadolinium using  \ref{c=2.6Plots} $c = 2.6 $, \ref{c=3.0Plots} $c = 3 $ and \ref{c=3.5Plots} $c = 3.5 $, where the solid line in the profile plot is the ground truth and the dotted line is the reconstruction.}
	\label{fig:ThreeStepForC}
\end{figure}

Taking again a closer look at the two material problem out of figure~\ref{fig:ThreeStepStart}, we want to investigate the impact the selection of $c$ makes. As seen in figure~\ref{fig:REC} choosing a different value for $c$ changes how accurately we can recreate the individual materials. On one side it effects bin-decomposition especially when it is done in a recursive way such as in algorithm~\ref{alg:RecursiveBinDecomposition}, but also by controlling $A$ and therefore our ability to identify the materials. When using the same material approximation as in  figure~\ref{fig:REC}, one can see in figure~\ref{fig:ThreeStepForC}\ref{c=2.6Plots} that $c = 2.6$ gives the best results and  with $c =3$ a  part of iodine is misidentified as gadolinium. \ref{c=3.0Plots} also shows that the amount of gadolinium is overestimated. Setting $c = 3.5$ one can see in \ref{c=3.5Plots} that the most of iodine is mislabeled, and the reconstructed amount of gadolinium is even larger.

\begin{figure}[t]
	\begin{enumerate}[label=(\alph*)]
		\item\label{c=2.6PlotsNewA}
		\includegraphics[width=0.925\textwidth]{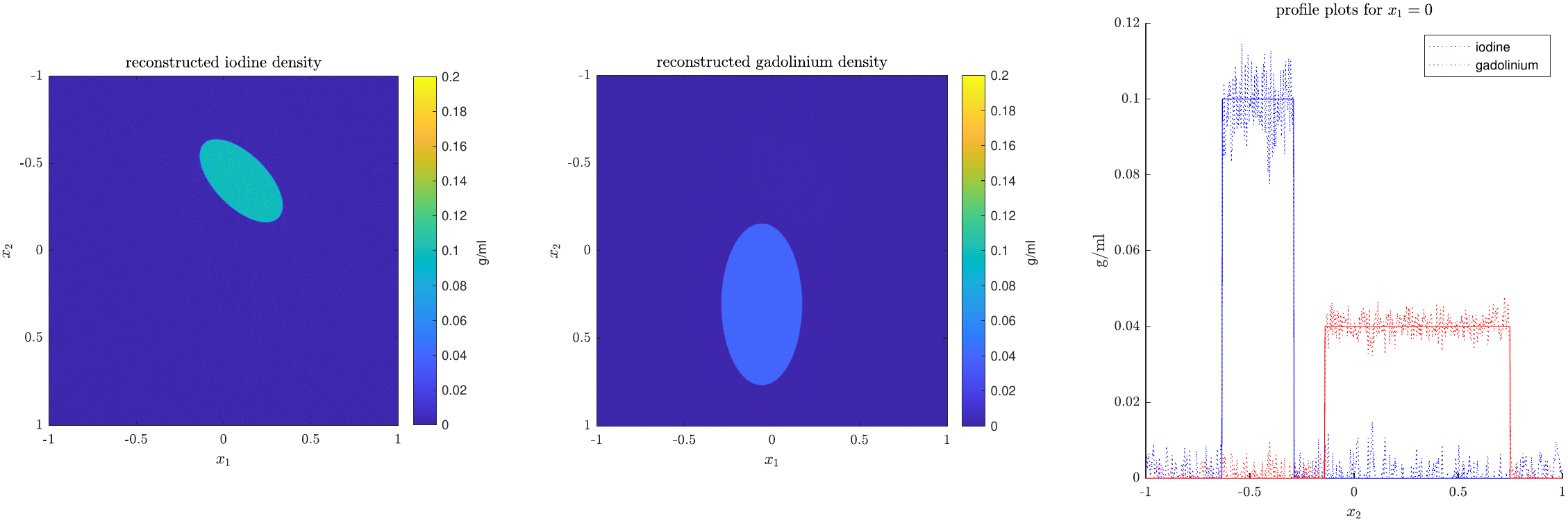}
		\bigskip
		\item\label{c=3.0PlotsNewA}
		\includegraphics[width=0.925\textwidth]{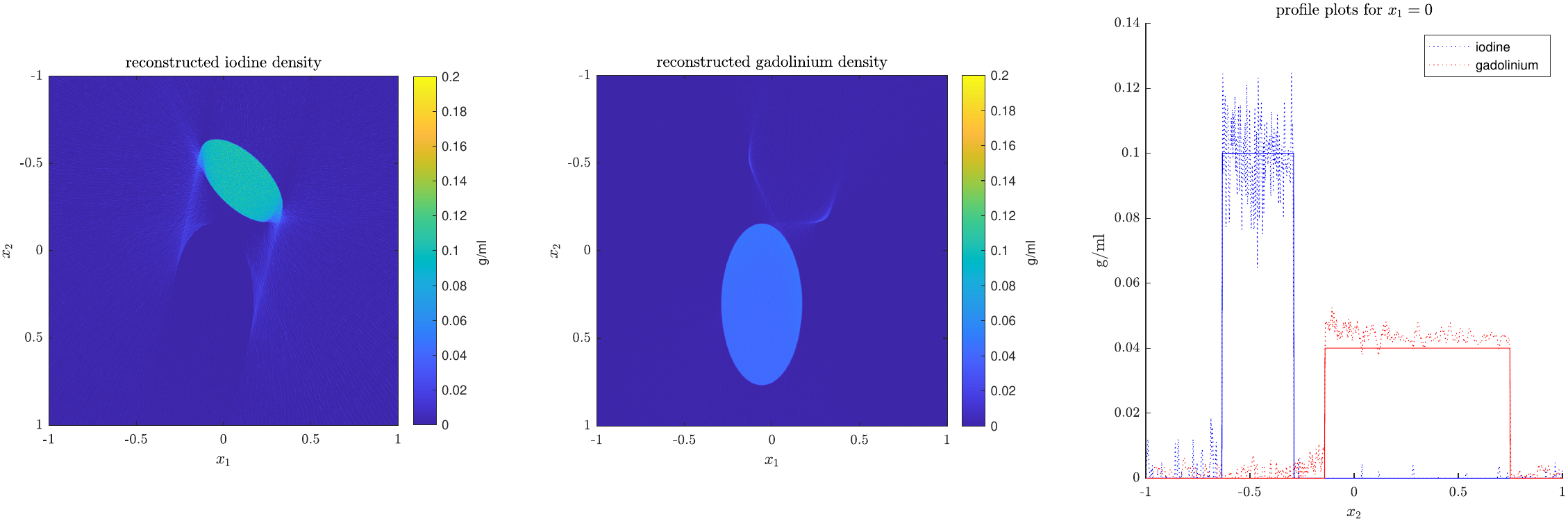}
		\bigskip
		\item\label{c=3.5PlotsNewA}
		\includegraphics[width=0.925\textwidth]{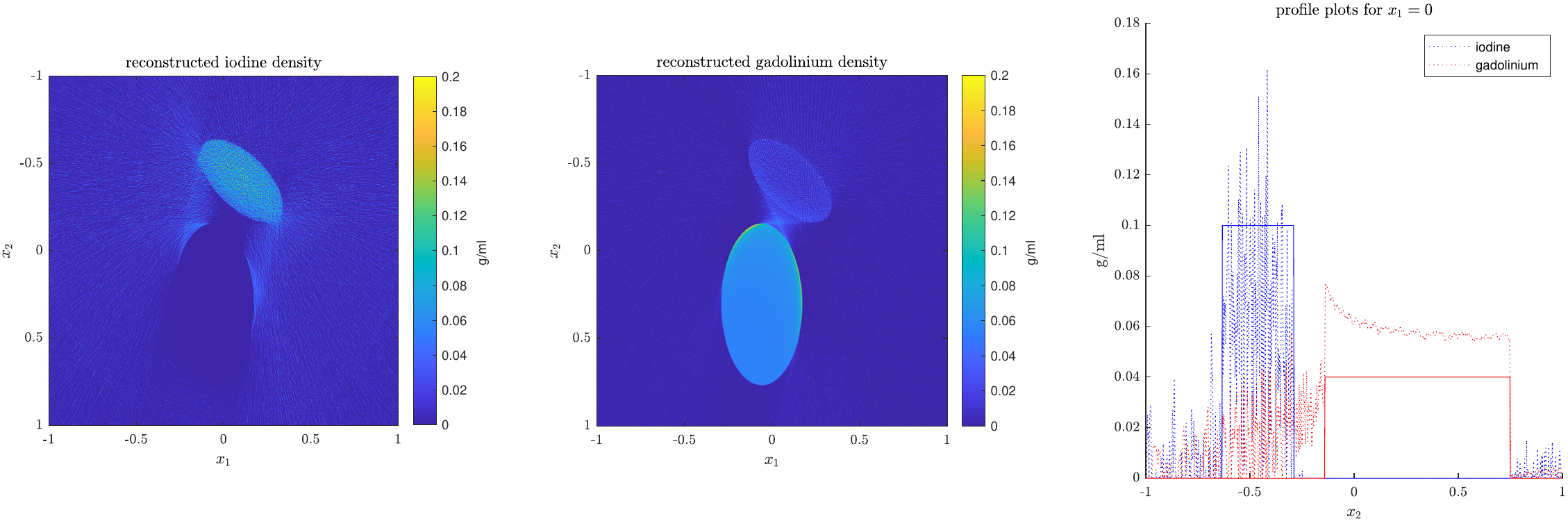}
	\end{enumerate}
	\caption{The three reconstructions of iodine and gadolinium using  \ref{c=2.6PlotsNewA} $c = 2.6 $, \ref{c=3.0PlotsNewA} $c = 3 $ and \ref{c=3.5PlotsNewA} $c = 3.5 $ with $A$ calculated by \eqref{eq:calcA}, where the solid line in the profile plot is the ground truth and the dotted line is the reconstruction.}
	\label{fig:ThreeStepForCNewA}
\end{figure}
As the value of $c$ effects the material-energy decomposition, we want to investigate how choosing a different $A$ can improve the reconstruction. In the previous experiments we calculated $A$ via
\begin{align*}
	a^{(i)}_m = \frac{\mu_m(E)}{E^{-c}},
\end{align*}
where $E$ is the applicable electron binding energy. We now choose 
\begin{align}\label{eq:calcA}
	a^{(i)}_m = \left(\Psi^{-1}(\Phi(z))\right)_i
\end{align}
for $z_m = 1$ for all $\tilde{m} \neq m$ and $z_{\tilde{m}} = 0$. This improves the material decomposition significantly. In the case of $c=2.6$ the quantitative values are even closer to the ground truth, as seen in figure~\ref{fig:ThreeStepForCNewA}\ref{c=2.6PlotsNewA}. For $c =3.0$ we can also see a better separation of materials, when comparing figure~\ref{fig:ThreeStepForCNewA}\ref{c=3.0PlotsNewA} with figure~\ref{fig:ThreeStepForC}\ref{c=3.0Plots}. Figure~\ref{fig:ThreeStepForCNewA}\ref{c=3.5PlotsNewA} shows that for $c = 3.5$ the materials can still be separated, however the error from the bin decomposition in the first energy bin leads to an overall larger error.

\begin{figure}
	\begin{enumerate}[label=(\alph*)]
		\item\label{Shift0}
		\includegraphics[width=0.925\textwidth]{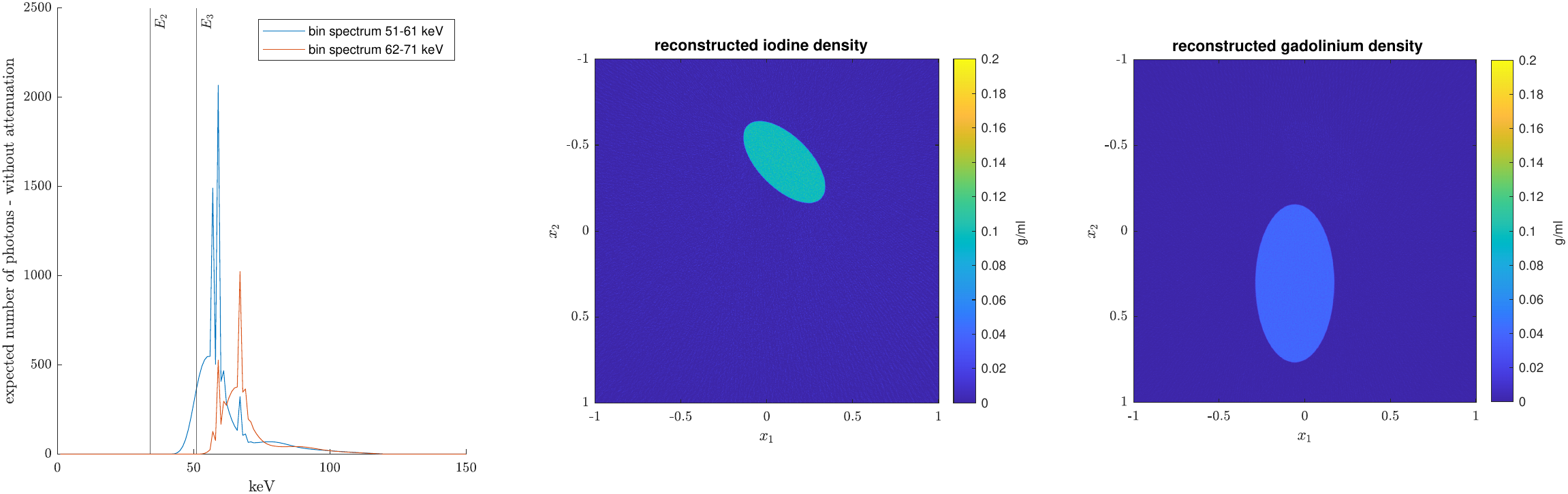}
		\smallskip
		\item\label{Shift5}
		\includegraphics[width=0.925\textwidth]{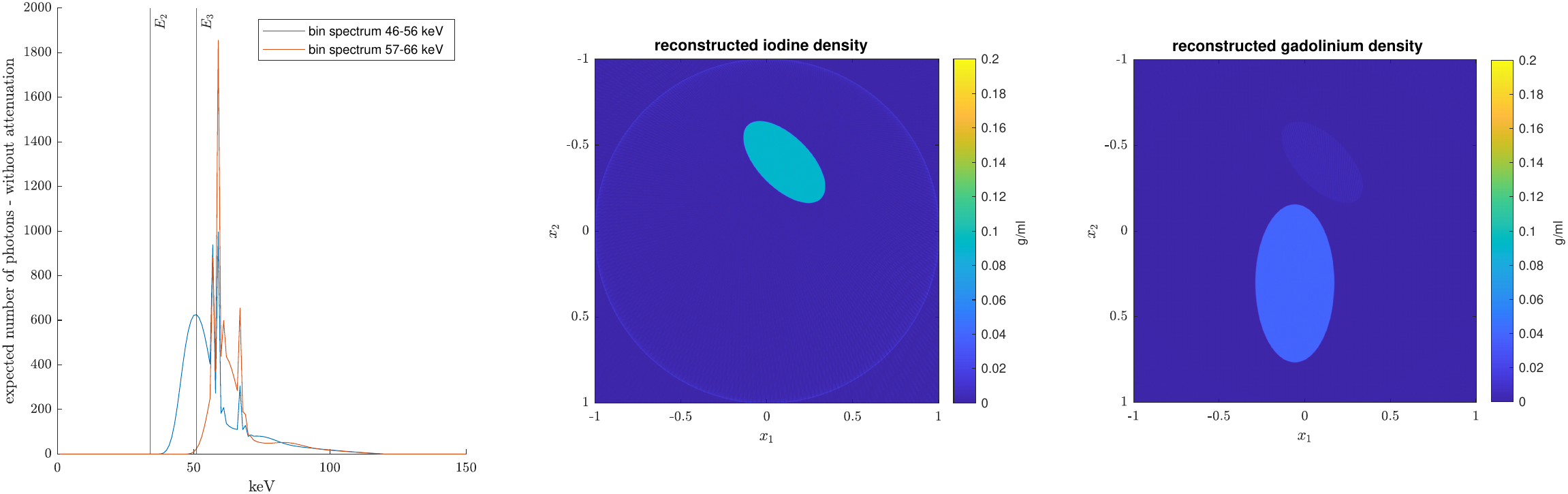}
		\smallskip
		\item\label{Shift10}
		\includegraphics[width=0.925\textwidth]{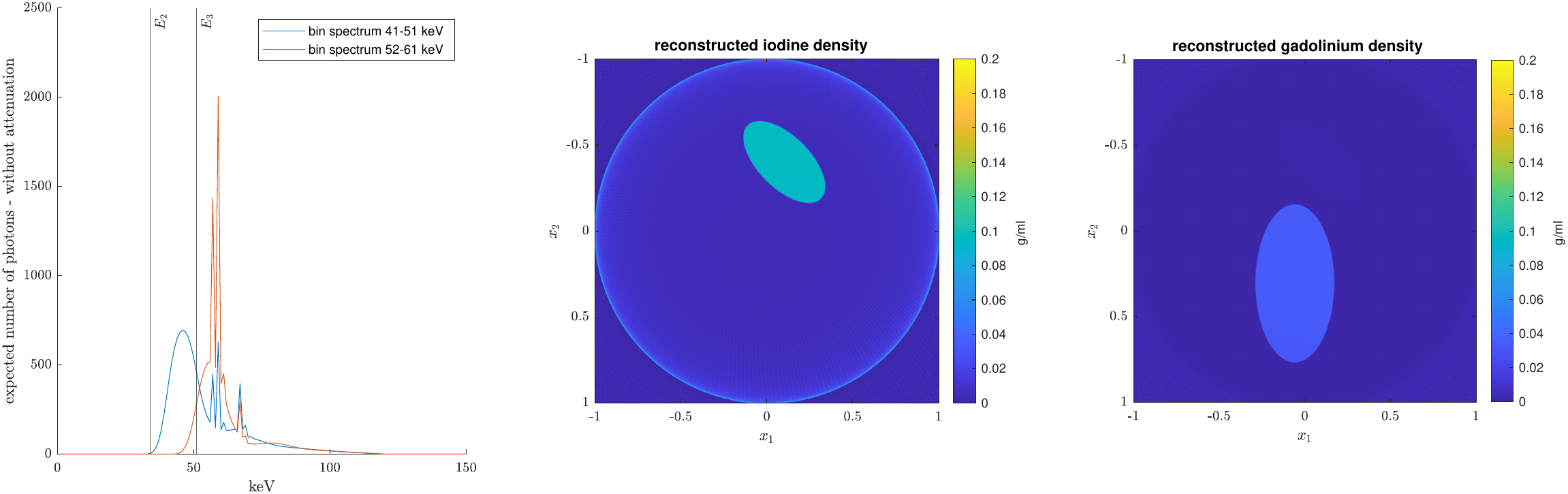}
		\smallskip
		\item\label{Shift15}
		\includegraphics[width=0.925\textwidth]{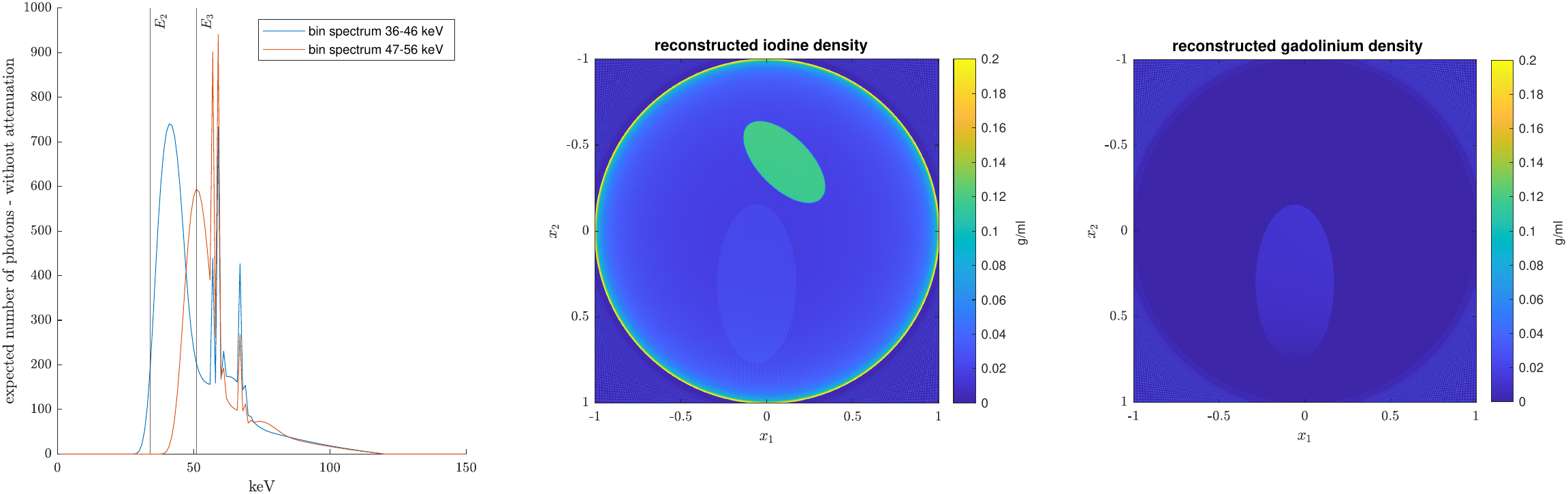}
	\end{enumerate}
	\caption{The reconstruction of iodine and gadolinium using c = $2.6$, for sensitivity functions following the uniqueness condition~\ref{Shift0}, the second bin exceeding interval $I_3$ slightly~\ref{Shift5}, exceeding interval $I_3$ significantly~\ref{Shift10} and the support of both sensitivity functions exceeding $I_2 \cup I_3$ and respectively $I_3$~\ref{Shift15}.}
	\label{fig:ThreeStepForShiftedBinSupp}
\end{figure}

We now investigate the necessities of the uniqueness condition. When $\ker(A^{(k)}) \neq \{0\}$, there is at least one material which can be represented by a linear combination of the other materials, which implies that the material-energy decomposition does not omit a unique solution. We will therefore only consider the case where the bin sensitivity functions are also non zero  outside the required energy intervals. We will again use the scenario described in figure~\ref{fig:ThreeStepStart}. In the cases depicted in \ref{fig:ThreeStepForShiftedBinSupp}\ref{Shift5}, \ref{Shift10} and \ref{Shift15} we have $\int_{I_3} s_2(E) \mathrm{d}E \neq 0$. Therefore theorem~\ref{theo:bin_sinogram_decomposition_non_ideal} is not satisfied. In figure~\ref{fig:ThreeStepForShiftedBinSupp}\ref{Shift5} we still obtain a satisfactory result, as $\int_{I_3} s_2(E) \mathrm{d}E \gg \int_{I_2} s_2(E) \mathrm{d}E $. For the scenarios \ref{Shift10} and \ref{Shift15}, the reconstruction loses accuracy. Especially in the last case where we also have $\int_{I_1} s_1(E) \mathrm{d}E \neq 0$.

\section{Conclusion}
\label{sec:conclusion}        
We used similarities in the material attenuation spectra based on the photoelectric effect, which is the main contributor of photon absorption in case of lower energy X-rays and materials with high atomic number, 
to further separate the non linear part of the  MSCT problem. From this followed the three step representations~\eqref{eq:Y_Phi_Psi_A} and~\eqref{eq:Y_Psi_R_A}  with the corresponding reconstruction approaches~\ref{Approche1} and~\ref{Approche2}. 
In the former the material-energy decomposition is done directly after the bin-decomposition and in the latter it is done in image space. In both cases the newly introduced bin-decomposition aims to recover individual physical effects contributing to the bin measurements. The material-energy decomposition then tries to identify the individual materials based on their absorption properties given by  the matrix $A$.
For the bin decomposition problem we proved conditions for well-posedness with theorem~\ref{theo:diffeomorph_ideal} and uniqueness with theorem~\ref{theo:uniqueness_ideal} and theorem~\ref{theo:uniqueness_overdetermined}.
The material-energy decomposition can then be evaluated as a linear problem by looking at the properties of $A$ to determine, if the materials can be isolated.
Together this gives uniqueness conditions for the bin-sinogram decomposition problem in form of theorem~\ref{theo:bin_sinogram_decomposition_ideal} and theorem~\ref{theo:bin_sinogram_decomposition_non_ideal}. 
All these conditions only depend on the relation of sensitivity functions to the binding energies of the individual materials and the values of the corresponding factors $a_m^{(i)}$ used to represent the material attenuation spectra in~\eqref{eq:material-attenuation-spectrum}.

Using simulated MSCT hardware we showed that our conditions for uniqueness are applicable and with the implementation of the reconstruction approach~\ref{Approche1} via algorithm~\ref{alg:RecursiveBinDecomposition} we also reconstructed a two material two detector-bin MSCT problem with satisfactory results.
Our numerical results showed further, how $c$, the choice of sensitivity bins, and Compton scattering affect the reconstruction.

The three step approach has also multiple future paths, which are open to further development. One is implementing other physical properties in the modelling of the material attenuation function, to obtain an even better reconstruction of the material properties.
This could also give a better applicability of the result in the scenario of higher X-ray energies and materials with small atomic number.
Another direction is the implementation of a non recursive reconstruction of bin-reading function. In this case the reconstruction errors for one energy value would not multiply in the following values. This direction also has the possibility of incorporating sensitivity functions not following the requirements from theorem~\ref{theo:bin_sinogram_decomposition_non_ideal}.

\bmhead{Funding}
This work was supported by Deutsche Forschungsgemeinschaft (DFG) - Project number 578955408
\bmhead{Acknowledgement}
This version of the article has been accepted for publication, after peer review but is not the Version of Record and does not reflect post-acceptance improvements, or any corrections. The Version of Record is available online at: \url{https://www.doi.org/10.1007/s11220-026-00873-w}.
\bibliography{references}

\end{document}